\theoremstyle{plain}
\newtheorem{theorem}{Theorem}[section]
\newtheorem{lemma}[theorem]{Lemma}
\newtheorem{proposition}[theorem]{Proposition}
\newtheorem{corollary}[theorem]{Corollary}
\newtheorem{Bounded Diameter Lemma}[theorem]{Bounded Diameter Lemma}
\theoremstyle{definition}
\newtheorem{definition}[theorem]{Definition}
\newtheorem{remark}[theorem]{Remark}
\title{Intersection number and stability of some inscribable graphs}
\author{Jinsong Liu and Ze Zhou}
\address{HUA Loo-Keng Key Laboratory of Mathematics, Chinese Academic of Sciences, Beijing 100190, China }
\address{Institute of Mathematics, Academic of Mathematics $\&$ System Sciences,
Chinese Academic of Sciences, Beijing 100190, China}
\email{liujsong@math.ac.cn zhouze@amss.ac.cn}
\begin{document}

\maketitle

\begin{abstract}
A planar graph is inscribable if it is combinatorial equivalent to
the skeleton of an inscribed polyhedron in the unit sphere.
For an inscribable graph, if in its combinatorial equivalent class
we could also find a polyhedron inscribed in each convex surface
sufficiently close to the unit sphere $\mathbb{S}^2$, then we call
such an inscribable graph a stable one.

By combining the Teichm\"{u}ller theory of packings with differential topology
method, in this paper we shall investigate the stability of some inscribable graphs.

\bigskip
\noindent{\bf Mathematics Subject Classifications (2000):} 51M20, \,51M10,  \,52C26.

\bigskip
\noindent {\bf Keywords:} \, \,inscribable graph, \, stability, \,
intersection number, \, circle pattern.
\end{abstract}

\setcounter{section}{-1}

\section{Introduction}\label{In}

A graph is called planar if it can be embedded in the unit sphere
${\mathbb S}^2$. And a planar graph is called inscribable if it can
be realized as the skeleton of the convex hull of a set of finite
points lying over the unit sphere. In the book \cite{Stein}, the
Swiss mathematician Jakob Steiner asked for a combinatorial
characterization of those inscribable graphs. To be specific, in
which cases does a  polyhedral graph (the skeleton of a
polyhedron) can be combinatorially equivalent to the skeleton of a
convex polyhedron inscribed in the sphere?

This seems to be a rather intractable problem. In fact, it's almost
a hundred years later when Steinitz \cite{Steinitz} found an example
of "non-inscribable" graph in 1927. Whereafter, more and more
non-inscribable graphs are discovered. For instance, the polyhedral
graph of the following singly-truncated cube is exactly the simplest
non-inscribable one.

\begin{figure}[htbp]\centering
\includegraphics[width=0.40\textwidth]{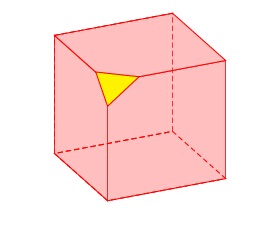}
\caption{An example of non-inscribable graph}
\end{figure}

Moreover, due to the Klein Model
of hyperbolic 3-space, one could regard an inscribed polyhedron as
an ideal hyperbolic polyhedron. In view of such an observation,
Rivin \cite{Rivin1} then completely resolved Steiner's problem by an
investigation of the geometry of ideal hyperbolic polyhedra.

For a polyhedral graph $G$, let $G^{\ast}$ denote its dual graph. We
call a set of edges $\Gamma=\{e_1,e_2,\cdots,e_k\} \subset G$ a {\it
prismatic circuit}, if the {\it dual edges}
$\{e_1^{\ast},e_2^{\ast},\cdots,e_k^{\ast}\}$ form a simple closed
curve in the dual graph $G^{\ast}$ and does not bound a face in
$G^{\ast}$. Rivin's theorem \cite{Rivin1} is then stated as follows.

 \begin{theorem}\label{Thm}
A polyhedral graph $G=G(V,E)$ is of inscribable type if and only if
there exists a weight $w$ assigned to its edges set $E$ such that:
 \begin{itemize}
\item[$(W1)$] For each edge $e \in E$, \:$0 < w(e) < 1/2$.
\item[$(W2)$] For each vertex $v$, the total weights of all edges incident to $v$ is equal to
$1$.
\item[$(W3)$] For each prismatic circuit $\gamma \subset E$,
the total weights of all edges in $\gamma$ is strictly greater than
1.
\end{itemize}
 \end{theorem}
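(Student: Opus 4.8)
The plan is to translate the combinatorial weight condition into a statement about dihedral angles of ideal hyperbolic polyhedra, and then invoke the characterization of such polyhedra. First I would use the Klein (projective) model of $\mathbb{H}^3$: realizing the open unit ball as hyperbolic $3$-space, Euclidean straight segments become hyperbolic geodesics, so a Euclidean convex polyhedron inscribed in $\mathbb{S}^2$ becomes a convex \emph{ideal} hyperbolic polyhedron, all of whose vertices lie on the sphere at infinity. Thus $G$ is of inscribable type precisely when $G$ is the $1$-skeleton of some convex ideal polyhedron in $\mathbb{H}^3$.

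Next I would set up the dictionary between weights and geometry. Given an ideal polyhedron $P$ with $1$-skeleton $G$, let $\theta_e \in (0,\pi)$ be the interior dihedral angle along the edge $e$, and set $w(e) := (\pi - \theta_e)/(2\pi)$, i.e. the normalized exterior dihedral angle. Condition $(W1)$ is then equivalent to $\theta_e \in (0,\pi)$, automatic for a nondegenerate convex polyhedron. The vertex condition $(W2)$ reads $\sum_{e \ni v}(\pi - \theta_e) = 2\pi$; this is exactly the assertion that a horospherical cross-section cutting off the ideal vertex $v$ is a Euclidean polygon, whose exterior angles sum to $2\pi$ by the Gauss--Bonnet theorem. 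Finally $(W3)$, namely $\sum_{e \in \gamma}(\pi - \theta_e) > 2\pi$ for each prismatic circuit $\gamma$, is the compatibility condition ruling out configurations in which the prescribed angles force the polyhedron to split or degenerate along the curve dual to $\gamma$.

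With this dictionary, the theorem reduces to the characterization of ideal polyhedra by their dihedral angles. The necessity direction is the routine one: starting from an inscribed polyhedron, pass to the associated ideal polyhedron, read off the $\theta_e$, define $w$ as above, and verify $(W1)$ from convexity, $(W2)$ from Gauss--Bonnet on each ideal vertex link, and $(W3)$ by showing that a prismatic circuit meets $P$ in a cross-sectional curve whose total exterior turning must strictly exceed $2\pi$. The sufficiency direction carries the real content: given abstract weights satisfying $(W1)$--$(W3)$, one must construct an ideal polyhedron realizing the dihedral angles $\theta_e = \pi - 2\pi w(e)$.

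I expect the main obstacle to be precisely this existence (and uniqueness) statement for ideal hyperbolic polyhedra with prescribed dihedral angles. My plan for it is a continuity/degree argument on the space of convex ideal polyhedra combinatorially equivalent to $G$: parametrize such polyhedra by their angle data, use the Schl\"{a}fli differential formula together with the volume functional to show that the map to angle-space is a local diffeomorphism, and then identify the image with the open region cut out by $(W1)$--$(W3)$ by analyzing the admissible boundary behavior (vertices colliding, or the polyhedron pinching along a prismatic circuit). Uniqueness would follow from the strict concavity of the volume functional, upgrading the local statement to a homeomorphism onto the admissible angle region and yielding the stated equivalence.
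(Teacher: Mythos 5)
You should first be aware of a mismatch in the premise of the exercise: the paper does not prove this statement at all. Theorem \ref{Thm} is Rivin's characterization of inscribable graphs, quoted verbatim from \cite{Rivin1} and used as a black box (e.g.\ in the proof of Proposition \ref{Prop3} and in the bending construction there). So there is no internal argument to compare against; the only fair comparison is with Rivin's original proof, and judged against that, your outline is essentially a faithful reconstruction of it. Your dictionary is correct: in the Klein model inscribed polyhedra are exactly convex ideal polyhedra, $w(e)=(\pi-\theta_e)/(2\pi)$ turns $(W1)$ into nondegenerate convexity, $(W2)$ into the statement that each horospherical vertex link is a closed convex Euclidean polygon (exterior angles summing to $2\pi$), and $(W3)$ into the condition that the total exterior turning along any non-facial closed transversal curve exceeds $2\pi$. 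Your identification of existence-with-prescribed-angles as the real content, and the variational/continuity strategy for it, is also the route actually taken in Rivin's work.

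Two caveats, so you do not underestimate what remains. First, the necessity of $(W3)$ is not mere ``compatibility'': the lemma that every closed curve on the boundary of a convex ideal polyhedron crossing edges transversally, and not encircling a single vertex or face, has total exterior turning strictly greater than $2\pi$ is a genuine theorem in \cite{Rivin1}, proved via the hyperbolic Gauss map and the polar dual (a de Sitter polyhedron), not by a two-line Gauss--Bonnet count. Second, your appeal to ``the Schl\"{a}fli differential formula together with the volume functional'' needs care for \emph{ideal} polyhedra: the edge lengths $l_e$ in $dV=-\tfrac{1}{2}\sum_e l_e\,d\theta_e$ are infinite, so one must renormalize by horospherical truncation, and the strict concavity of volume is established on a space of angle structures (after triangulating, with attendant bookkeeping for non-simplicial faces where flat edges with $\theta_e=\pi$ appear); the properness analysis at the boundary of the admissible angle region, where $(W3)$ enters, is likewise where most of the work sits. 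As a blind proposal your plan is the correct one and names the right obstacles, but the deferred existence--rigidity core is the entire difficulty of Rivin's theorem, and none of it is supplied by the paper you were comparing against.
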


Note that the condition $(W2)$ is equivalent that the sum of the
weights of edges bounding a face in the dual graph $G^{\ast}$ is
equal to $1$.

In addition, for any given polyhedral graph $G$, Hodgson, Rivin and
Smith \cite{Rivin2} indicate that there exists a polynomial time
algorithm (in the number of vertices) to decide whether it is
inscribable.

\medskip
These consequences are really elegant. However, a
"sphere" in the real physical world often doesn't mean a standard
sphere in mathematic sense.
It then seems significant to go a further step to
consider the stability problem of inscribable graphs. Namely, given
any convex surface $\tilde{S} \subset \mathbb R^3$ sufficiently close
to the unit sphere $\mathbb{S}^2$, for an inscribable graph $G$, is
there always a polyhedron $P_{G,\tilde{S}}$ inscribed in $\tilde{S}$
with skeleton combinatorially equivalent to $G$?

\medskip
In what follows, to formulate the above question as a mathematic one,
let's introduce some notions which will depict the exact meaning of
"sufficiently close".

Suppose that $S_1:\mathbb{\hat{C}} \,\substack{f_1 \\ \rightarrow \\
\,}\, \mathbb{R}^3$, $S_2:\mathbb{\hat{C}} \,\substack{f_2 \\
\rightarrow \\ \,}\, \mathbb{R}^3$ are two $C^k$ embeddings of the
Riemann sphere in the $3$-dimensional Euclidean space
$\mathbb{R}^3$. Given $\epsilon>0$, we say $S_1,S_2$ are
$\epsilon$-$C^k$-close to each other, if the $C^k$-norm of every
coordinate component of $f_1-f_2$ is less than $\epsilon$. For
example, if two embedding sphere are $C^3$-close to each other, it
follows from the elementary surface theory that the images of $S_1,
S_2$ and their curvatures will close to each other (see \cite{Do}).
Particularly, if
$\tilde{S}:\mathbb{\hat{C}} \,\substack{\tilde{f} \\ \rightarrow \\
\,}\, \mathbb{R}^3$ is an embedding sphere which is
$\epsilon$-$C^3$-close to the unit sphere $\mathbb{S}^2$ for some
sufficiently small $\epsilon>0$, then the surface $\tilde{S}$ is
both strictly convex and sufficiently round.

For any given inscribable graph $G$, suppose that there exists an $\epsilon>0$
such that: for any surface $\tilde{S}(\epsilon)$
which is $\epsilon$-$C^k$-close to the unit sphere $\mathbb{S}^2$, there is
always a polyhedron $P_{G,\tilde{S}(\epsilon)}$ inscribed in
$\tilde{S}(\epsilon)$ with skeleton combinatorially equivalent to
$G$. Then we say $G$ is $C^k$-stable. Recalling Rivin's result
(Theorem \ref{Thm}), the problem on how to characterize an
inscribable graph is equivalent to solve a system of linear
inequalities. However, due to the non-openness of the solutions
space of these inequalities, there may exist inscribable graph which
isn't stable. That implies the stability problem of inscribable
graphs wouldn't be a trivial task.

\medskip
Now let $P=P(\mathcal{V},\mathcal{E},\mathcal{F})\subset
\mathbb{R}^3$ be given a convex polyhedron. For every vertex $v \in
\mathcal V$, we cut a small pyramid from $P$ by a plane which is
near to $v$ and transversal to every edge $e\in \mathcal E$
emanating from $v$.  Thus we obtain a new polyhedron
$P_{\mbox{\large $\diamond$}}$, called the \textbf{truncated
polyhedron} of $P$. Denote by $G(P_{\mbox{\large $\diamond$}})$ the
skeleton of $P_{\mbox{\large $\diamond$}}$.

\begin{figure}[htbp]\centering
\includegraphics[width=0.7\textwidth]{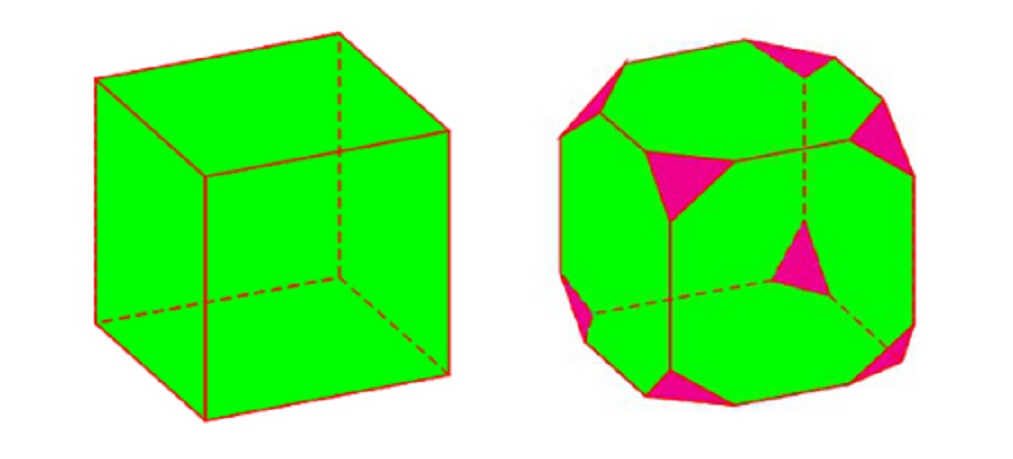}
\caption{The cube and its truncated polyhedron}
\end{figure}

In this paper we shall prove

 \begin{theorem}\label{Main0}
Let $P,P_{\mbox{\large $\diamond$}}$ and $G(P_{\mbox{\large
$\diamond$}})$ be as above. Assume that the degree $d(v)$ of each
vertex $v\in \mathcal V$ is odd. Then the graph $G(P_{\mbox{\large
$\diamond$}})$ is inscribable and $C^1$-stable.
 \end{theorem}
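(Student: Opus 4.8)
My plan splits into two essentially independent parts: first realize $G(P_{\diamond})$ as an inscribable graph by writing down Rivin weights, and then upgrade that realization to a $C^1$-stable one by a degree/intersection-number argument. For the first part I would record the combinatorics of the truncation. Each vertex $v$ of $P$ of degree $d(v)$ is replaced by a $d(v)$-gon face $T_v$, each original edge survives as a shortened \emph{old} edge, and $\partial T_v$ consists of $d(v)$ \emph{new} edges; consequently $P_{\diamond}$ is simple (trivalent), every vertex being incident to exactly one old edge and two new edges. I would then assign weights symmetrically, $w\equiv\alpha$ on old edges and $w\equiv\beta$ on new edges. Condition $(W2)$ at a trivalent vertex reads $\alpha+2\beta=1$, and $(W1)$ becomes $0<\alpha<1/2$, equivalently $1/4<\beta<1/2$.

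For $(W3)$ I would classify prismatic circuits by the number $t$ of truncation faces they meet, using that a simple dual cycle enters and leaves each truncation face $T_v$ through a pair of its new edges, while between truncation faces it runs through old edges. A circuit using $p$ old edges then has weight $p\alpha+2t\beta$. If $t\ge 2$ this already exceeds $4\beta>1$. If $t=0$ the circuit is either a genuine prismatic circuit of $P$ or the link in $P$ of some vertex $w$ (the old edges around $w$, which now enclose $T_w$); either way it has length $\ge 3$ and weight $\ge 3\alpha$. If $t=1$ the only danger is $p\le 1$: the case $p=0$ uses only the two new edges at $T_v$, joining $T_v$ to two distinct faces, which cannot close into a simple cycle; and $p=1$ means the two new edges and the connecting old edge share a common vertex of $P_{\diamond}$, i.e.\ they form that trivalent vertex's face-link, which is \emph{not} prismatic. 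Hence every genuine prismatic circuit has weight $>\alpha+2\beta=1$ as soon as $3\alpha>1$. Choosing any $\alpha\in(1/3,1/2)$ (so $1/4<\beta<1/3$) satisfies $(W1)$--$(W3)$, and Theorem \ref{Thm} gives inscribability. I emphasize that this half uses only $d(v)\ge 3$, not the parity hypothesis.

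For stability I would pass, through the Teichm\"uller/circle-pattern description of inscribed polyhedra, to a finite-dimensional smooth model of the realization problem. An inscribed polyhedron in a convex surface $\tilde S$ is a configuration of its $2|\mathcal E|$ vertices on $\tilde S$ whose convex hull has face lattice $G(P_{\diamond})$; on the round sphere the planar faces cut out circles, and the data is equivalent to a circle pattern whose intersection angles are the dihedral angles encoded by the weights above. I would organize these configurations, after the circle-pattern reduction, as the zero set of a smooth map $\Phi_{\tilde S}$ depending $C^1$-continuously on $\tilde S$, whose vanishing expresses that all faces close up and are co-circular; the round sphere supplies a zero of $\Phi_{S^2}$ by the first part together with Rivin's rigidity. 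The subtlety is that on $S^2$ the solutions are not isolated: the M\"obius group acts, so $\Phi_{S^2}^{-1}(0)$ is a whole orbit and the naive implicit function theorem fails once the symmetry is broken by the perturbation $\tilde S$. I would therefore work with the mod-$2$ intersection number of $\Phi$ near this orbit, an invariant unchanged as $S^2$ is deformed to a $C^1$-close $\tilde S$; if it is odd it is nonzero, forcing $\Phi_{\tilde S}$ to retain a zero and hence $\tilde S$ to admit the inscribed polyhedron $P_{G,\tilde S}$.

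The heart of the argument, and the only place the hypothesis enters, is the computation that this intersection number is odd precisely because every $d(v)$ is odd. I expect the index to localize at the truncation faces $T_v$: stably inscribing the $d(v)$-gon $T_v$ in the perturbed surface should contribute a local degree whose parity equals that of $d(v)$ (an even polygon admitting a cancelling pair of critical configurations, an odd one not), so that the total intersection number is a product $\prod_v(\text{local parity})\equiv 1 \pmod 2$ exactly when all $d(v)$ are odd, and can vanish otherwise. Making this localization precise---identifying the bundle and section whose Euler number is being computed, and verifying the local parity at each $T_v$---is where I expect the main difficulty to lie; once the circle-pattern model and its $C^1$-dependence on $\tilde S$ are set up, the remainder is bookkeeping.
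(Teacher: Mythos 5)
Your first half is correct, complete, and in fact more elementary than the paper's own route. The symmetric weights $w\equiv\alpha$ on old edges and $\beta=(1-\alpha)/2$ on new edges with $\alpha\in(1/3,1/2)$ do verify Rivin's conditions: your bookkeeping is right because a dual edge of $G(P_{\diamond})$ is incident to a truncation-face vertex exactly when it is dual to a new edge, so a simple dual cycle through $t$ truncation faces uses exactly $2t$ new edges, and your three cases ($t\ge 2$, $t=0$ with $p\ge 3$, $t=1$ with $p\le 1$ excluded) dispose of $(W3)$. The paper never writes these weights down --- it obtains existence on the round sphere through the He--Liu Teichm\"uller theory of circle patterns (Lemma \ref{Lem4}) --- and your observation that inscribability needs no parity is consistent with the paper, where parity enters only for stability.

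The stability half, however, has a genuine gap exactly at the step you yourself flag as the expected difficulty, and your guess about how to fill it points in the wrong direction. You propose that the mod-$2$ index localizes at the truncation faces, each $T_v$ contributing a local parity $d(v)\bmod 2$, with the total index a product of these. That is not the mechanism, and nothing in your setup supports it. In the paper the intersection number equals $1$ for a \emph{global} reason: the M\"obius action is killed by fixing a mark $\mathfrak{M}=\{\mathscr{O},p_1,p_2,p_3\}$ (a cleaner device than working equivariantly near the orbit, which you raise but never resolve), and then Rivin's rigidity makes the normalized solution \emph{unique}, so everything reduces to proving that this single solution is nondegenerate, i.e.\ that the circle-pattern map $f_{w,\mathfrak{M}}:\mathcal{T}_{G^{\ast}(P)}\to Z_{oc}$ is transverse to the coplanarity stratum $Z(P_{\mbox{\large $\diamond$}})$ (Proposition \ref{Prop3}). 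Parity enters there, and only there, through linear algebra: by Andreev's theorem and the Bao--Bonahon hyperideal polyhedra theorem (Lemmas \ref{Lem5}, \ref{Lem6}), $Z(P_{\mbox{\large $\diamond$}})$ is parametrized near the solution by $PO(3,1)\times U$ with $U$ a space of dihedral-angle assignments, and the tangent space $T_{w}U$ is re-coordinatized by the angles on ordinary edges together with the defect curvatures $k(v_ie)=\pi-(w(e)+w(e_{v_i,1})+w(e_{v_i,2}))$ at the truncated vertices; this change of variables is an alternating circulant-type system around each truncation face, invertible precisely when every $d(v)$ is odd. When some $d(v)$ is even the linearization \emph{degenerates} and transversality fails --- there is no ``cancelling pair'' of solutions, as your heuristic suggests; likewise the orthogonality relation $\theta_1=\cdots=\theta_{d(v)}=\pi/2$ used for boundary control (Lemma \ref{Lem2}) breaks down. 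Finally, your framework is missing the compactness input needed for your index to be defined and deformation-invariant (no circles degenerating, no interstices pinching along the homotopy); for the $C^1$-statement the paper sidesteps this by perturbing locally around the unique transversal intersection (a transversal isolated zero of $f_{w,K,\mathfrak{M}}$ persists under $C^1$-small perturbation), but that shortcut requires exactly the uniqueness-plus-transversality package your proposal leaves open.
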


In addition, for a polyhedral graph $G(P)=(\mathcal{V},\mathcal{E},
\mathcal{F})$, let's construct a new graph $G_{+}(P)$ as follows. More precisely, for every edge $e\in \mathcal{E}$, we associate it with a
vertex $\mathbbm{v}_e$. Whenever two different edges $e_1,e_2\in
\mathcal E$ both belong to a common face $f\in \mathcal{F} $ and
meet at a same vertex $v\in \mathcal{V} $, we then connect an edge
from $\mathbbm{v}_{e_1}$ to $\mathbbm{v}_{e_1}$.
Thus we obtain a
new graph $G_{+}(P)$ associated to $P$, which is called the \textbf{
rectified graph} of the polyhedron $P$.

\begin{figure}[htbp]\centering
\includegraphics[width=0.5\textwidth]{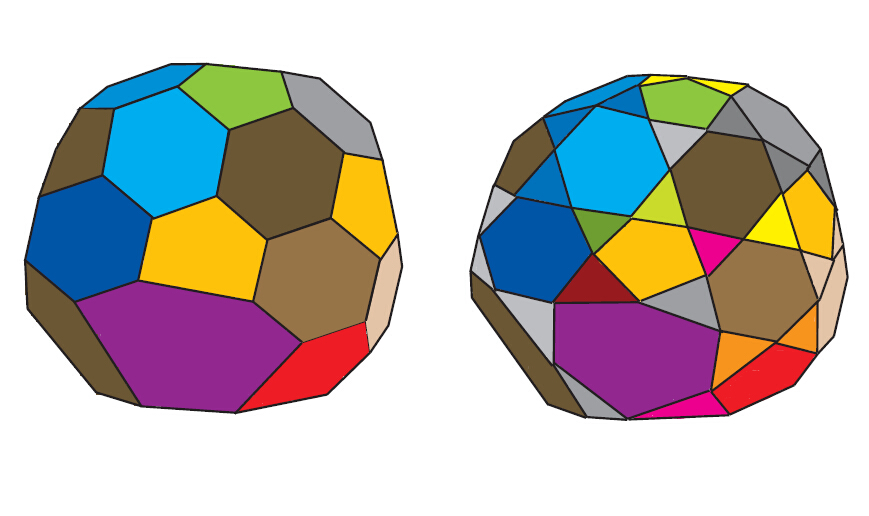}
\caption{The dodecahedron and its rectified graph}
\end{figure}

For the rectified graph
$G_{+}(P)=(\mathcal{V}_{+},\mathcal{E}_{+},\mathcal{F}_{+})$,
obviously we have
$$
|\mathcal{V}_{+}|=|\mathcal{E}|,\: |\mathcal{E}_{+}|=2|\mathcal{E}|,
\: |\mathcal{F}_{+}|=|\mathcal{V}|+|\mathcal{F}|.
$$
Furthermore, we have

\begin{theorem}\label{Main1}
Let $P,G_{+}(P)$ be as above. If $d(v)$ is odd for any vertex $v\in
\mathcal{V}$, then $G_{+}(P)$ is inscribable and $C^3$-stable.
\end{theorem}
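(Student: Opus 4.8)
The plan is to handle the two assertions separately: inscribability is a one-line application of Rivin's criterion, while $C^3$-stability is the substantial part and is where the parity hypothesis on the degrees is used. For inscribability I would apply Theorem~\ref{Thm} with the \emph{uniform weight} $w(e)\equiv 1/4$ on every edge $e\in\mathcal{E}_+$. Every vertex $\mathbbm{v}_e$ of $G_+(P)$ has degree $4$: if $e=\{u,v\}$ lies on the faces $f_1,f_2$ of $P$, then $\mathbbm{v}_e$ is joined to the four neighbouring corners $(u,f_1),(u,f_2),(v,f_1),(v,f_2)$. Thus $(W1)$ and $(W2)$ are immediate, namely $0<\tfrac14<\tfrac12$ and $4\cdot\tfrac14=1$. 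For $(W3)$ the key observation is that the dual graph $G_+(P)^{\ast}$ is the vertex--face incidence (radial) graph of $P$: its vertices are $\mathcal{V}\sqcup\mathcal{F}$, and $v$ is joined to $f$ exactly when $v\in f$. This graph is bipartite, so every prismatic circuit has even length, and its facial $4$-cycles $v$--$f_1$--$u$--$f_2$ are precisely those coming from edges of $P$. Since $P$ is $3$-connected, two of its faces share at most one edge, so a $4$-cycle $v$--$f_1$--$u$--$f_2$ can occur only when $\{u,v\}$ is an edge bordering both $f_1$ and $f_2$, i.e. only as a facial cycle. Hence every prismatic circuit has length $\ge 6$ and total weight $\ge 6\cdot\tfrac14=\tfrac32>1$, verifying $(W3)$. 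Note that this argument uses nothing about the parity of the $d(v)$.

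For $C^3$-stability I would recast the existence of an inscribed realization of $G_+(P)$ in $\tilde S$ as a zero-finding problem and propagate the round solution by an intersection-number argument. Since $\tilde S$ is $\epsilon$-$C^3$-close to $\mathbb S^2$ it is strictly convex with second fundamental form close to the round one, so the relevant deformation (Teichm\"uller) theory of circle patterns applies on $\tilde S$. I would assign to each edge $e\in\mathcal{E}$ a point $p_e\in\tilde S$ (the prospective vertices of $G_+(P)$) and impose that for every vertex $v$ of $P$ the points $\{p_e: e\ni v\}$ be coplanar and that for every face $f$ the points $\{p_e: e\subset f\}$ be coplanar; these are the $V$- and $F$-faces of the rectified polyhedron. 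Packaging the coplanarity conditions as a smooth map $F_{\tilde S}\colon U\to\mathbb R^{N}$ on a neighbourhood $U$ of the round configuration, depending smoothly on $\tilde S$, the zeros of $F_{\tilde S}$ lying in the open locus where the points are in convex position and correctly ordered are exactly the inscribed realizations of $G_+(P)$ in $\tilde S$. At $\tilde S=\mathbb S^2$ the orthogonal primal--dual circle packing of $P$ (Koebe--Andreev--Thurston), whose edge-tangency points are the vertices of the rectified polyhedron and whose $V$- and $F$-circles meet orthogonally, provides an explicit zero $p^0$ that is unique up to M\"obius normalization.

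After fixing a M\"obius slice so that domain and target have equal dimension, the oriented (or mod $2$) count of zeros of $F_{\tilde S}$ in the admissible locus is a homotopy invariant as $\tilde S$ ranges over the $C^3$-ball about $\mathbb S^2$; hence it equals its value at $\mathbb S^2$, and it suffices to show the latter is nonzero. This is exactly where oddness of the degrees enters. Linearizing the coplanarity system at the symmetric round configuration, the would-be kernel is spanned by first-order flexes that alternate in sign around the faces of $G_+(P)$; such a global alternating flex exists if and only if \emph{every} face of $G_+(P)$ has even length, i.e. every $d(v)$ (and every $|f|$) is even. Because each $d(v)$ is odd the $V$-faces are odd polygons, no consistent alternating flex can close up, and the linearization is injective modulo the normalized M\"obius directions. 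Thus $p^0$ is a transverse zero, its local intersection number is $\pm1$, the total count is odd and in particular nonzero, and $F_{\tilde S}$ must retain a zero in the admissible locus for every nearby $\tilde S$; that is, $G_+(P)$ is $C^3$-stable.

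I expect two genuine difficulties. The first, and main, obstacle is the transversality computation: one must identify the linearized operator at the orthogonal packing precisely and turn the ``alternating flex'' heuristic into an actual statement that oddness of the $d(v)$ annihilates its kernel --- this is the analytic heart of the intersection-number calculation, and it is presumably why \emph{all} degrees, rather than a single one, are assumed odd. The second is properness/compactness, needed for the degree to be well defined: as $\tilde S$ varies one must rule out that a putative solution escapes the admissible locus through colliding points, loss of convexity, or a combinatorial collapse of a face. Here I would exploit the generous margins from the inscribability step --- the round solution sits at weight exactly $1/4$, a definite distance from the walls $0$ and $1/2$ of $(W1)$ and with slack $\ge\tfrac12$ in $(W3)$ --- together with the a-priori estimates of the packing theory to keep nearby solutions uniformly inside the admissible locus. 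The M\"obius-normalization bookkeeping is routine but must be carried out so that the count stays finite and the intersection number is well defined.
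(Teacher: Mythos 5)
Your inscribability argument is correct, and it is a genuinely clean observation: $G_{+}(P)$ is $4$-regular, its dual is the bipartite radial (vertex--face incidence) graph of $P$, every prismatic circuit has even length and, by $3$-connectedness of $P$, length at least $6$, so the constant weight $1/4$ verifies $(W1)$--$(W3)$ of Theorem~\ref{Thm} with no parity hypothesis (the paper instead obtains inscribability as the $K=\mathbb{S}^2$ case of its intersection argument). The stability half, however, contains a fatal dimension error. Your unknowns are $|\mathcal E|$ points on a surface, i.e.\ $2|\mathcal E|$ real parameters, while the coplanarity system has codimension
\[
\sum_{v\in\mathcal V}\bigl(d(v)-3\bigr)+\sum_{f\in\mathcal F}\bigl(|f|-3\bigr)
=\bigl(2|\mathcal E|-3|\mathcal V|\bigr)+\bigl(2|\mathcal E|-3|\mathcal F|\bigr)=|\mathcal E|-6
\]
by Euler's formula. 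So the expected zero set is $(|\mathcal E|+6)$-dimensional, and no $6$-dimensional M\"obius slice can equalize dimensions: you would be comparing $2|\mathcal E|-6$ with $|\mathcal E|-6$. This excess is real, not an artifact: your own weight computation shows that near $w\equiv 1/4$ the solutions of $(W2)$ subject to the open conditions $(W1)$, $(W3)$ form a polytope of dimension $|\mathcal E_{+}|-|\mathcal V_{+}|=|\mathcal E|$ (the incidence matrix of $G_{+}(P)$ has full rank $|\mathcal V_{+}|$ since the $V$-faces are odd cycles, so $G_{+}(P)$ is not bipartite), and by Rivin's theorem each such weight is realized by an inscribed polyhedron with skeleton $G_{+}(P)$, distinct weights being M\"obius-inequivalent. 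Hence the Koebe configuration $p^{0}$ is \emph{not} unique up to M\"obius normalization; it lies in an $|\mathcal E|$-dimensional family of zeros, the linearization of your $F_{\mathbb{S}^2}$ has kernel of dimension $|\mathcal E|+6$, and the claims ``linearization injective modulo M\"obius,'' ``transverse zero of index $\pm1$'' are false. Your alternating-flex analysis, which purports to kill the kernel using oddness of the $d(v)$, cannot be repaired within this setup.

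The missing idea is to rigidify the problem by \emph{tangency} before counting. The paper does not work with free points subject to all coplanarity conditions; it restricts to configurations coming from $K$-circle packings realizing the dual graph $G^{\ast}(P)$, parametrized by the Teichm\"uller space $\mathcal T_{G^{\ast}(P)}\cong\mathbb{R}^{2|\mathcal E|-3|\mathcal V|}$ of interstice moduli via Lemma~\ref{Deform} (the mark $\mathfrak{M}$ plays the role of your M\"obius slice). For a packing, the face-coplanarity is automatic (tangency points around $f$ lie on the circle $C_{f^{\ast}}$), so only the vertex-coplanarity remains; it cuts out the submanifold $Z(P_{\mbox{\large $\diamond$}})\subset Z_{oc}$, which by Lemma~\ref{Lem3} has codimension exactly $2|\mathcal E|-3|\mathcal V|=\dim\mathcal T_{G^{\ast}(P)}$, so the intersection number $I\bigl(f_{K,\mathfrak{M}},\Lambda,Z(P_{\mbox{\large $\diamond$}})\bigr)$ is well defined. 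Oddness then enters in two places quite different from where you put it: first, compactness (Lemma~\ref{Lem2}) --- coplanarity of the tangency points around a vertex of odd degree forces all intersection angles with the circle $C_v$ to equal $\pi/2$, which yields the a priori separation needed so that $f_s(\partial\Lambda)$ misses $Z(P_{\mbox{\large $\diamond$}})$ along the homotopies from the round sphere to $K$ and from packings to circle patterns with small dihedral angles; second, transversality (Proposition~\ref{Prop3}) --- the linear substitution from the special-edge weights of $P_{\mbox{\large $\diamond$}}$ to the defect curvatures $k(v_ie)$ is invertible precisely because each $d(v)$ is odd, after which Rivin's rigidity gives a single intersection point and $I=1$. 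Your odd-cycle alternation intuition is the right mechanism, but it operates on these packing-theoretic steps; applied to the free-point coplanarity system it has nothing to act on, and as it stands your proposal does not prove $C^3$-stability. (Your properness worries are legitimate but secondary; they too are resolved by the $\pi/2$-angle rigidity, not by slack in the Rivin inequalities.)
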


Given a compact strictly convex surface $K$, for any affine half
space $H^+$ with $K \nsubseteq H^+$, the intersection $H^+ \cap K$
is either empty, or a point, or a topological disk. In the last case
we call it a $K$-disk, and its boundary (in $K$) a $K$-circle. We
recall that a planar graph $G$ is $K$-inscribable if there exists a
polyhedron $P_{G}$ inscribed in $K$ with skeleton combinatorially
equivalent to the graph $G$.

In terms of the above conventions, to prove Theorem \ref{Main1} is
equivalent to prove that there exists $\epsilon>0$ such that
$G_{+}(P)$ is $\tilde{S}(\epsilon)$-inscribable provided that the
embedding surface $\tilde{S}(\epsilon)$ is $\epsilon$-$C^3$-close to
the unit sphere $\mathbb{S}^2$. Recall that
$G(P)=(\mathcal{V},\mathcal{E}, \mathcal{F})$ and
$G_+(P)=(\mathcal{V}_+,\mathcal{E}_+, \mathcal{F}_+)$. To acquire
such a polyhedron, we need to find the vertices set $\mathcal V_+$
such that: (1) they correspond to the tangent points of the
$\tilde{S}(\epsilon)$-circle packing realizing the graph
$G^{\ast}(P)$, where $G^{\ast}(P)$ is the dual graph of the skeleton
of $P$; (2) if $e_1,e_2,\cdots,e_k\in \mathcal{E}$ are incident to a
same vertex $v\in \mathcal{V}$, then the corresponding points
$\mathbbm{v}_{e_1},\mathbbm{v}_{e_2},\cdots,\mathbbm{v}_{e_k} \in
\mathcal V_+$ locate in a same plane.

Hence it's necessary to prove that the intersection of these two
configuration spaces is non-empty. By combining the intersection number
theory from differential topology with a homotopy technique, we shall obtain the desired
result. Similarly, Theorem \ref{Main0} could be deduced by means of
transversality theory.

\bigskip
We now briefly describe how this paper is organized. In the
preliminary section we briefly give an introduction to
transeversality theory and intersection number theory, which will
play an important role throughout this paper. In Section 3 we study
the Teichm\"{u}ller theory of packings, which characterizes the
configuration space of $K$-circle packings. Section 4 is devoted to
the proof of Theorem \ref{Main1}. The last section provides a
geometric insight into the tangent space of another configuration.
With the help of this method, we demonstrate a tranvsersality
theorem which leads to a proof of Theorem \ref{Main0}. Furthermore,
we complete some details on the computation of intersection number
used in Section 3.

{\bf Notational Conventions}.

Through this paper, for any given set $A$ we use the notation $|A|$
to denote the cardinality of $A$.

\bigskip
\section{Preliminaries}\label{Pn}
In this section, we will introduce several definitions and notations from
differential topology, especially transversality and intersection
number. Please refer to \cite{Gui, Hir} for background on these
notions.

\bigskip
First of all, assume that $M,N$ are two oriented smooth manifolds, and $S\subset N$ is a submanifold.

\begin{definition}
Suppose that $f:M\rightarrow N$ is a ${C}^{1}$ map. Given $A\subset
M$, we say $f$ is transverse to $S$ along $A$, denoted by
$f\mbox{\Large $\pitchfork$}_{A}S$, if
\[
Im(df_{x})+T_{f(x)}S=T_{f(x)}N,
\]
whenever $x\in A\cap f^{-1}(S)$. When $A=M$, we simply denote
$f\mbox{\Large $\pitchfork$} S$.
\end{definition}

Let $S\subset N$ be a closed submanifold such that $dim M + dim S =
dim N$. Suppose $\Lambda \subset M$ is an open subset with compact
closure $\bar{\Lambda} \subset M$. Given a continuous map
$f:M\rightarrow N$ such that $f(\partial \Lambda)\cap S=\emptyset$,
where $\partial\Lambda=\bar{\Lambda}\setminus\Lambda$, we will
define a topological invariant $I(f,\Lambda,S)$, called the
intersection number between $f$ and $S$ in $\Lambda$.

\bigskip
If $f\in C^{0}(\bar{\Lambda},N)\cap C^{\infty}(\Lambda,N)$ such that
$ f\mbox{\Large $\pitchfork$}_{\Lambda}S$, then $\Lambda \cap
f^{-1}(S)$ consists of finite points. For each $x \in \Lambda \cap
f^{-1}(S)$, the $sgn(f,S)_{x}$ at $x$ is $+1$, if the orientations
on $Im(df_{x_{j}})$ and $T_{f(x_j)}S$ "add up" to preserve the
prescribed orientation on $N$, and $-1$ if not.

\begin{definition}
If $\Lambda \cap f^{-1}(S)=\{x_1,x_2,\cdots,x_m\}$, then we define
the intersection number between $f$ and $S$ in $\Lambda$ to be
$$
I(f,\Lambda,S):=\sum\nolimits_{j=1}^m sgn(f,S)_{x_j}.
$$
\end{definition}

The proof of the following proposition is in the same style as that
of the homotopy invariance of Brouwer degree. Please see \cite{Gui,Hir}, or Milnor's book \cite{Mil}.

\begin{proposition}\label{Prop1}
Suppose that $f_{i}\in C^{0}(\bar{\Lambda},N)\cap
C^{\infty}(\Lambda,N)$, $f_{i}\mbox{\Large $\pitchfork$}_{\Lambda}S
$ and $f_{i}(\partial \Lambda)\cap S=\emptyset, \:i=0,1$. If there
exists a homotopy
$$
H \in C^{0}({I\times \bar\Lambda},N)
$$
such that $H(0, \cdot)=f_0(\cdot), \:\: H(1, \cdot)=f_1(\cdot)$, and
 $H(I\times\partial \Lambda)\cap S=\emptyset$, then
$$
I(f_{0},\Lambda,S)=I(f_{1},\Lambda,S).
$$
\end{proposition}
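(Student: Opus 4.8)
The plan is to mimic the proof of homotopy invariance of the Brouwer degree, reducing everything to the classification of compact one-dimensional manifolds with boundary. First I would replace the merely continuous homotopy $H$ by a smooth map that is transverse to $S$, without disturbing the data at the two ends or near the lateral boundary. Since $I\times\partial\Lambda$ is compact, $S$ is closed, and $H(I\times\partial\Lambda)\cap S=\emptyset$, there is an open collar $U\supset I\times\partial\Lambda$ in $I\times\bar\Lambda$ with $H(U)\cap S=\emptyset$; likewise $f_0,f_1$ are already smooth and transverse to $S$ on $\Lambda$. Using a relative smooth approximation together with the Thom transversality theorem (Sard's theorem), I would produce $\tilde H\in C^{0}(I\times\bar\Lambda,N)$ that agrees with $f_0$ on $\{0\}\times\bar\Lambda$ and with $f_1$ on $\{1\}\times\bar\Lambda$, coincides with $H$ on the collar $U$ (so that still $\tilde H(U)\cap S=\emptyset$), is smooth on $I\times\Lambda$, and satisfies $\tilde H\pitchfork_{I\times\Lambda} S$. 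Because $\tilde H=H$ on $U$, the intersection set stays away from the lateral boundary, and because the ends are untouched the intersection numbers at $t=0,1$ are exactly $I(f_0,\Lambda,S)$ and $I(f_1,\Lambda,S)$.

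Second, I would examine the preimage $W:=\tilde H^{-1}(S)\subset I\times\bar\Lambda$. The dimension bookkeeping is essential: from $\dim M+\dim S=\dim N$ one gets $\dim(I\times M)-(\dim N-\dim S)=1$, so transversality makes $W$ a smooth $1$-manifold. Transversality holds in the interior by construction, on the end slices $t=0,1$ because $f_0,f_1\pitchfork S$, and on $I\times\partial\Lambda$ vacuously since $\tilde H$ misses $S$ there; hence $W$ is a manifold with boundary that avoids $I\times\partial\Lambda$, and $\bar\Lambda$ compact makes $W$ compact. Consequently
\[
\partial W=\bigl(\{0\}\times(f_0^{-1}(S)\cap\Lambda)\bigr)\ \sqcup\ \bigl(\{1\}\times(f_1^{-1}(S)\cap\Lambda)\bigr).
\]

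Third, orientations. The orientations of $N$, of $S$, and of $I\times M$ induce a preimage orientation on $W$, and at each boundary point comparison with the outward normal of $I\times\bar\Lambda$ assigns a sign $\pm1$. The crucial bookkeeping is the normal flip: at the bottom slice $t=0$ the outward normal points in the $-\partial_t$ direction, while at the top slice $t=1$ it points in $+\partial_t$. Tracing the preimage-orientation convention then identifies the boundary sign of $W$ at $(0,x)$ with $-\,sgn(f_0,S)_x$ and at $(1,y)$ with $+\,sgn(f_1,S)_y$. Invoking the classical fact that a compact oriented $1$-manifold is a finite disjoint union of circles and arcs, so that the two endpoints of each arc carry opposite boundary signs and the total signed boundary vanishes, I obtain
\[
0=\sum_{p\in\partial W}sgn(p)=-\,I(f_0,\Lambda,S)+I(f_1,\Lambda,S),
\]
which is the asserted equality.

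The hard part will be the first step: since $H$ is only $C^0$, I must smooth and perturb it to transversality while simultaneously keeping the already-transverse ends $f_0,f_1$ fixed and preserving the disjointness from $S$ along the lateral boundary. This is a relative transversality statement, handled with the collar $U$ and a partition of unity localizing the perturbation to the interior, away from both the ends and from $U$. One must also verify that the perturbation creates no spurious intersections on the end slices; this is automatic because $f_0,f_1$ are left unchanged and their transverse intersection points are isolated and stable. The remaining orientation computation is routine once the conventions are fixed, exactly as in the degree-theoretic model.
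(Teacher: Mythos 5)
Your proposal is correct and is essentially the argument the paper itself invokes: the paper gives no proof of Proposition~\ref{Prop1}, instead citing the standard homotopy-invariance argument for the Brouwer degree in \cite{Gui,Hir,Mil}, which is precisely what you reproduce (relative smoothing and transversality of the homotopy via the collar around $I\times\partial\Lambda$, the preimage $\tilde H^{-1}(S)$ as a compact oriented $1$-manifold whose boundary lies only in the end slices, and the outward-normal sign flip yielding $-I(f_0,\Lambda,S)+I(f_1,\Lambda,S)=0$). Your handling of the delicate point---perturbing to transversality in the interior while fixing the already-transverse ends and preserving disjointness from $S$ on the lateral boundary---is the right relative transversality statement and matches the cited sources.
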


The next lemma, which helps us to manipulate the intersection number for
general mappings, is a consequence of Sard's theorem \cite{Gui,Hir}.
\begin{lemma}\label{Lem1}
For any $f\in C^{0}(\bar{\Lambda},N)$ with $f(\partial \Lambda)\cap
S=\emptyset$, there exists $g\in C^{0}(\bar{\Lambda},N)\cap
C^{\infty}(\Lambda,N)$ and $H\in C^{0}(I\times\bar{\Lambda},N)$ such
that
\begin{itemize}
\item[$(1)$] $g\mbox{\Large $\pitchfork$}_{\Lambda}S$;
\item[$(2)$] $H(0,\cdot)=f(\cdot),H(1,\cdot)=g(\cdot)$;
\item[$(3)$] $H(I\times\partial \Lambda)\cap S=\emptyset$.
\end{itemize}
\end{lemma}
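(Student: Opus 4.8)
The plan is to realize the statement as the standard two-step principle ``smooth first, then make generic,'' executed so that the positive gap between $f(\partial\Lambda)$ and the closed set $S$ is never consumed. Concretely, I would first replace $f$ by a smooth map $g_0$ that is $C^0$-close to it, recording the connecting homotopy; then perturb $g_0$ by a small amount, supported inside $\Lambda$, to achieve transversality, again recording the homotopy; and finally concatenate the two homotopies. The whole point is quantitative control: every perturbation is taken smaller than a threshold set by $\delta:=\tfrac13\dist(f(\partial\Lambda),S)$, so that no intermediate map ever drags the boundary image across $S$.

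For the setup I would fix a proper Whitney embedding $N\hookrightarrow\mathbb{R}^L$ realizing $N$ as a closed submanifold, together with a tubular neighborhood $U\supset N$ and a smooth retraction $\pi\colon U\to N$ (Lipschitz with some constant $L_\pi$ near $N$); see \cite{Hir}. Since $\partial\Lambda=\bar\Lambda\setminus\Lambda$ is compact, $S$ is closed, and $f(\partial\Lambda)\cap S=\emptyset$, the quantity
\[
3\delta:=\dist\bigl(f(\partial\Lambda),\,S\bigr)>0
\]
is strictly positive, and by uniform continuity there is an open neighborhood $W\supset\partial\Lambda$ in $\bar\Lambda$ on which $\dist(f(\cdot),S)>2\delta$. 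Note that the ``danger set'' $\bar\Lambda\setminus W$, where $f$ may approach $S$, is compact and contained in $\Lambda$.

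Next, the smoothing step: mollifying the $\mathbb{R}^L$-valued map $f$ on $\Lambda$ produces $\tilde g\in C^0(\bar\Lambda,\mathbb{R}^L)\cap C^\infty(\Lambda,\mathbb{R}^L)$ with $\sup_{\bar\Lambda}|\tilde g-f|$ as small as we wish. Choosing this supremum small relative to $\delta$ and $L_\pi$ forces both the image of $\tilde g$ and each segment from $f(x)$ to $\tilde g(x)$ to lie in $U$, so $g_0:=\pi\circ\tilde g$ lies in $C^0(\bar\Lambda,N)\cap C^\infty(\Lambda,N)$, and the projected straight-line homotopy
\[
H_0(t,x):=\pi\bigl((1-t)f(x)+t\,\tilde g(x)\bigr)
\]
connects $f$ to $g_0$ with every value within $\delta$ of $f(x)$; hence on $\partial\Lambda$ one has $\dist(H_0(t,x),S)\ge 3\delta-\delta>0$, i.e. $H_0(I\times\partial\Lambda)\cap S=\emptyset$. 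For the transversality step I would take a smooth cutoff $\phi$ equal to $1$ on $\bar\Lambda\setminus W$ and supported in a compact subset of $\Lambda$, and apply the parametric transversality theorem (Thom, via Sard; see \cite{Gui,Hir}) to the family $G(x,b)=\pi\bigl(g_0(x)+\phi(x)\,b\bigr)$ with $b$ in a small ball of $\mathbb{R}^L$. Since $G$ is a submersion in the $b$-directions it is transverse to $S$, so for almost every small $b$ the map $g:=G(\cdot,b)$ is transverse to $S$ on $\Lambda$---transversality holding vacuously on $W$, where $g$ stays far from $S$ once $|b|$ is small. The homotopy $H_1(t,x)=G(x,tb)$ joins $g_0$ to $g$ and, for $|b|$ small, remains bounded away from $S$ over $\partial\Lambda$. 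Concatenating $H_0$ with $H_1$ and rescaling the $t$-interval back to $I$ yields the desired $H\in C^0(I\times\bar\Lambda,N)$ with $H(0,\cdot)=f$, $H(1,\cdot)=g$, and $H(I\times\partial\Lambda)\cap S=\emptyset$, establishing $(1)$--$(3)$.

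The main obstacle is precisely the bookkeeping that couples the two perturbations: one must pick the smoothing scale and the transversality radius $|b|$ both below thresholds determined by $\delta$ and $L_\pi$, so that neither homotopy moves the boundary image $f(\partial\Lambda)$ by more than $2\delta$. Localizing the transversality perturbation inside $\Lambda$ through the cutoff $\phi$ is what reconciles the requirement ``transverse everywhere on $\Lambda$'' with the requirement ``leave a neighborhood of $\partial\Lambda$ essentially fixed,'' and is the step that makes the boundary constraint $(3)$ survive alongside the genericity $(1)$.
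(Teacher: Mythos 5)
Your proposal is correct and follows exactly the route the paper intends: the paper offers no proof of Lemma \ref{Lem1}, merely declaring it a consequence of Sard's theorem with the same references \cite{Gui,Hir} that you invoke, and your argument (Whitney embedding plus tubular-neighborhood retraction, relative smoothing, then parametric transversality with a cutoff supported in $\Lambda$, all calibrated against $\delta=\tfrac13\dist(f(\partial\Lambda),S)$) is the standard implementation of that citation. The only point deserving a remark is the smoothing step: a fixed-scale mollification does not preserve continuity up to $\partial\Lambda$, so one should either use a smoothing scale shrinking near $\partial\Lambda$ or quote the relative approximation theorem (smooth on a prescribed open set, $C^0$-close globally) from \cite{Hir}, after which your quantitative bookkeeping goes through as written.
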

The above lemma, together with Proposition \ref{Prop1}, allows us to
define the intersection numbers for general continuous mappings.

\begin{definition}
For any $f\in C^{0}(\bar{\Lambda},N)$ with $f(\partial \Lambda)\cap
S=\emptyset$, we can define the intersection number
$$
I(f,\Lambda,S)=I(g,\Lambda,S).
$$
where $g$ is given as Lemma \ref{Lem1}.
\end{definition}
By Proposition \ref{Prop1}, the intersection number $I(f,\Lambda,S)$
is well-defined. Furthermore, we have the following homotopy
invariance property of this quantity.
\begin{theorem}\label{Thm1}
For $i=0,1$, suppose that $f_{i}\in C^{0}(\bar{\Lambda},N)$ such
that $f_{i}(\partial \Lambda)\cap S=\emptyset$. If there exists
$H\in C^{0}(I\times\bar{\Lambda},N)$ such that
\begin{itemize}
\item[$(1)$] $H(0,\cdot)=f_{0}(\cdot),H(1,\cdot)=f_{1}(\cdot)$,
\item[$(2)$] $H(I\times\partial \Lambda)\cap S=\emptyset$,
\end{itemize}
then we have $I(f_{0},\Lambda,S)=I(f_{1},\Lambda,S)$.
\end{theorem}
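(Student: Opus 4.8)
The plan is to reduce Theorem \ref{Thm1} to the smooth, transverse case already settled in Proposition \ref{Prop1}, by splicing together three homotopies. By the very definition of the intersection number, for $i=0,1$ I invoke Lemma \ref{Lem1} to produce maps $g_i\in C^0(\bar\Lambda,N)\cap C^\infty(\Lambda,N)$ with $g_i\mbox{\Large $\pitchfork$}_{\Lambda} S$, together with homotopies $H_i\in C^0(I\times\bar\Lambda,N)$ satisfying $H_i(0,\cdot)=f_i(\cdot)$, $H_i(1,\cdot)=g_i(\cdot)$, and $H_i(I\times\partial\Lambda)\cap S=\emptyset$; by definition $I(f_i,\Lambda,S)=I(g_i,\Lambda,S)$. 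Thus it suffices to prove $I(g_0,\Lambda,S)=I(g_1,\Lambda,S)$, and since $g_0,g_1$ are smooth and transverse this is precisely the setting of Proposition \ref{Prop1} — provided one can exhibit an admissible homotopy between them.

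First I would build that homotopy by concatenation. Reversing time in $H_0$ gives a homotopy from $g_0$ to $f_0$; following it by the given homotopy $H$ (from $f_0$ to $f_1$) and then by $H_1$ (from $f_1$ to $g_1$) yields a chain of maps $g_0\to f_0\to f_1\to g_1$. Concretely, I would split $I$ into the three subintervals $[0,1/3]$, $[1/3,2/3]$, $[2/3,1]$ and define $\tilde H\in C^0(I\times\bar\Lambda,N)$ piecewise by these three reparametrized segments. Continuity of $\tilde H$ follows from the pasting lemma, since on the overlapping slices $\{t=1/3\}$ and $\{t=2/3\}$ the adjacent pieces agree, sharing the common maps $f_0$ and $f_1$ respectively.

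It remains to verify the boundary hypothesis of Proposition \ref{Prop1}, namely $\tilde H(I\times\partial\Lambda)\cap S=\emptyset$, and this is where whatever care the argument requires is concentrated, though it is mild: on each subinterval $\tilde H$ restricts to a time-reversal of $H_0$, to $H$, or to $H_1$, each of which avoids $S$ on $I\times\partial\Lambda$ by construction or by hypothesis, and reversing time does not affect this. Hence no point of $\partial\Lambda$ is ever carried into $S$ along $\tilde H$. Applying Proposition \ref{Prop1} to $g_0,g_1$ and the homotopy $\tilde H$ then gives $I(g_0,\Lambda,S)=I(g_1,\Lambda,S)$, which combined with $I(f_i,\Lambda,S)=I(g_i,\Lambda,S)$ completes the proof. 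The only genuine obstacle is the bookkeeping of boundary-avoidance across the splice; the homotopy invariance itself is inherited wholesale from Proposition \ref{Prop1}.
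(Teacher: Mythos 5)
Your proof is correct and follows exactly the route the paper intends: the paper leaves Theorem \ref{Thm1} without a written proof precisely because it is the standard reduction, via Lemma \ref{Lem1} and the definition $I(f_i,\Lambda,S)=I(g_i,\Lambda,S)$, to Proposition \ref{Prop1} applied to a concatenated homotopy $g_0\to f_0\to f_1\to g_1$ that avoids $S$ on $I\times\partial\Lambda$. Your splicing and boundary-avoidance bookkeeping are exactly what is needed, so nothing is missing.
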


In particular, it immediately follows from the definition that:
\begin{theorem}\label{Thm3}
If $I(f,\Lambda,S)\neq 0$, then we have $\Lambda \cap
f^{-1}(S)\neq\emptyset$.
\end{theorem}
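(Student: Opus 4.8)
The plan is to argue by contraposition: I will assume that $\Lambda\cap f^{-1}(S)=\emptyset$ and deduce that $I(f,\Lambda,S)=0$, which is exactly the assertion. The first observation is that the standing hypothesis $f(\partial\Lambda)\cap S=\emptyset$ combines with the assumption to control all of $\bar\Lambda$. Indeed, since $\bar\Lambda=\Lambda\cup\partial\Lambda$, the two conditions $\Lambda\cap f^{-1}(S)=\emptyset$ and $f(\partial\Lambda)\cap S=\emptyset$ together give
\[
f(\bar\Lambda)\cap S=\emptyset .
\]

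Next I would exploit compactness. Because $\bar\Lambda$ is compact and $f$ is continuous, $f(\bar\Lambda)$ is a compact subset of $N$; and $S$ is a closed submanifold disjoint from it. Fixing any background metric on $N$, it follows that
\[
\delta:=\dist\bigl(f(\bar\Lambda),\,S\bigr)>0 .
\]
The intersection number $I(f,\Lambda,S)$ is by definition computed through a transverse representative $g$ supplied by Lemma \ref{Lem1}. The point is that the construction in Lemma \ref{Lem1} (smoothing followed by a Sard-type perturbation) produces a $g$ that is $C^0$-close to $f$; taking the approximation finer than $\delta/2$ forces
\[
g(\bar\Lambda)\cap S=\emptyset ,
\]
so in particular $\Lambda\cap g^{-1}(S)=\emptyset$. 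Such a $g$ is then vacuously transverse, $g\mbox{\Large $\pitchfork$}_{\Lambda}S$, with an empty intersection set.

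With this $g$ in hand the conclusion is immediate: the defining sum for $I(g,\Lambda,S)$ ranges over the empty set $\Lambda\cap g^{-1}(S)$, so $I(g,\Lambda,S)=0$, and by the very definition of the intersection number for continuous maps we obtain $I(f,\Lambda,S)=I(g,\Lambda,S)=0$. The only point that needs care — and the step I would flag as the main obstacle — is justifying that the transverse representative furnished by Lemma \ref{Lem1} can be taken $C^0$-close to $f$, together with a homotopy whose trace on $\partial\Lambda$ stays off $S$. This is precisely the content already encoded in Lemma \ref{Lem1} and Proposition \ref{Prop1}: the well-definedness of $I(f,\Lambda,S)$ guarantees that the value is independent of which admissible transverse representative we select, so computing it with the disjoint $g$ above is legitimate. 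Thus $I(f,\Lambda,S)\neq 0$ cannot occur unless $\Lambda\cap f^{-1}(S)\neq\emptyset$, which is the desired statement.
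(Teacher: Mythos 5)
Your proposal is correct, but note that the paper does not actually prove Theorem \ref{Thm3} at all: it is introduced with the phrase ``it immediately follows from the definition,'' with no argument supplied. Your contraposition argument is the standard way to substantiate that claimed immediacy, and it correctly isolates the one point where real content is needed, namely that the intersection number of $f$ may be computed from a transverse representative $g$ with $g(\bar\Lambda)\cap S=\emptyset$. One caveat deserves emphasis: Lemma \ref{Lem1} \emph{as stated} only asserts the existence of some transverse $g$ together with a homotopy avoiding $S$ on $I\times\partial\Lambda$; it does not assert that $g$ can be chosen $C^0$-close to $f$. The closeness you need is supplied by the \emph{proof} of that lemma --- the relative approximation theorems of \cite{Hir} and \cite{Gui} produce, for any $\eta>0$, a smooth transverse $g$ with $\sup_{\bar\Lambda}\dist(f,g)<\eta$, joined to $f$ by a short homotopy (straight-line in an ambient Euclidean embedding of $N$ composed with a tubular-neighborhood retraction), whose entire trace then stays within $\eta$ of $f(\bar\Lambda)$ --- so you are invoking the construction rather than the quoted statement, and it would be cleaner to say so. With $\eta<\delta/2$, where $\delta=\dist\bigl(f(\bar\Lambda),S\bigr)>0$ by your compactness argument, this homotopy misses $S$ everywhere (in particular on $I\times\partial\Lambda$), so $g$ is an admissible representative with $\Lambda\cap g^{-1}(S)=\emptyset$, and Proposition \ref{Prop1} legitimizes the conclusion $I(f,\Lambda,S)=I(g,\Lambda,S)=0$. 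So the argument is sound and in fact supplies detail the paper omits; the only repair needed is the more careful attribution of the $C^0$-approximation step.
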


\bigskip
\section{Teichm\"{u}ller theory of packings}\label{Cp}

Given a compact strictly convex surface $K$, in this section we
shall introduce the Teichm\"{u}ller theory of $K$-circle packings
with the same contact graph.

Roughly speaking, a $K$-circle (or $K$-disk) packing $\mathcal {P}$
is a configuration of $K$-circles $\{C_{v}:v\in V\}$ (or disks
$\{D_{v}:v\in V\}$) with specified patterns of tangency. The contact
graph (or nerve) of $\mathcal{P}$ is a graph $G_{\mathcal{P}}$,
whose vertex set is $V$ and an edge appears if and only if the
corresponding $K$-circles (or $K$-disks) touch.

Given a planar graph $G=G(V,E)$, let's fix a vertex $v_{0}\in V$ and
three ordered edges $e_1,e_2,e_3 \in E$ emanating from $v_0$. We
call the $4$-tuple $\mathscr{O}=\{v_0,e_1,e_2,e_3\}$ a combinatorial
frame associated to the graph $G$. Suppose $\mathcal {P}=\{C_{v}\}$
is a $K$-circle packing with the contact graph $G_{\mathcal {P}}=G$.
Denoting by $p_1,p_2,p_3$ the three tangent points corresponding to
the edges $e_1,e_2,e_3$, we call $\mathcal {P}$ a normalized
$K$-circle packing with mark
$\mathfrak{M}=\{\mathscr{O},p_1,p_2,p_3\}$.

For the compact strictly convex surface $K$, without loss of
generality, we now assume it lies below the plane $\{(x,y,z)\in
\mathbb R^3: z=1\}$ and is tangent to this plane at the point
$N=(0,0,1)$. The point $N$ is regarded as the "North Pole" of $K$.
Let $h:K \rightarrow \mathbb C \cup \{\infty\}$ denote the
"stereographic projections" with $h(N)=\infty$. Since $h$ can be
extended to a diffeomorphism between $K$ and $\hat{\mathbb C}$, we
then endow $\partial K$ with a complex structure by pulling back the
standard complex structure of $\hat{\mathbb C}$. Hence, up to
conformal equivalence, we can identify $K$ with the Riemann sphere
$\hat{\mathbb C}$.

Given a convex polyhedron
$P=P(\mathcal{V},\mathcal{E},\mathcal{F})\subset \mathbb{R}^3$, we
recall that $G^{\ast}(P)$ is the dual graph of the skeleton of $P$.
Denote $G^*(P)=(V, E)$. Let us fix a disk packing
$\mathcal{P}_0=\{D_0(v)\}_{v \in V}$ on the unit sphere $\mathbb
S^2(\cong\hat{\mathbb{C}})$ with the contact graph $G^*(P)$. Denote
$\hat{\mathbb{C}}-\cup_{v\in V}D_{0}(v)=\{I_1,I_2,\cdots, I_m\}$.
For each component $I \in \{I_1,I_2,\cdots, I_m\}$, we call it an
open interstice. Evidently, $I$ is a topological polygon. The region
$I$ has only finitely many boundary components. And each boundary
component is a piecewise smooth curve formed by finitely many
circular arcs or circles. Each (maximal) circular arc or circle on
the boundary $\partial I$ belongs to a unique circle in the disk
packing $\mathcal P_0$, and therefore is marked by an element of
$V$. The region $I$, together with a marking of the circular arcs or
circles on its boundary by elements of $V$ is called an {\it
interstice} of $\mathcal P_0$.

For each interstice $I$ of $\mathcal P_0$, we can define a conformal
polygon as pairs $h:I\rightarrow \hat{\mathbb{C}}$, where $h$ is
a quasiconformal embedding. For details on quasiconformal mappings,
please refer to Ahlfors' book \cite{Ahl}. The conformal polygons are
considered as analogs of the conformal quadrangles.

Denote $\partial I=\{\gamma_1, \gamma_2,\cdots, \gamma_n\}$, where
$\{\gamma_j\}_{1\leq j\leq n}$ is a marking of the circular arcs or
circles on its boundary. We say two such quasiconformal embeddings
$h_1,h_2: I\rightarrow \hat{\mathbb{C}}$ are Teichm\"{u}ller
equivalent, if the composition mapping $h_2\circ (h_{1})^{-1}:
h_1(I)\rightarrow h_2(I)$ is isotopic to a conformal homeomorphism
$f: h_1(I)\rightarrow h_2(I)$ such that for each side
$\gamma_{j}\subset\partial I$, $f$ maps $h_1(\gamma_{j})$ onto
$h_2(\gamma_{j})$.

\begin{definition}
The Teichm\"{u}ller space of $I$, denoted by $\mathcal{T}_I$, is the
space of all equivalence classes of quasiconformal embeddings
$h:I\rightarrow \hat{\mathbb{C}}$.
\end{definition}
\begin{remark}\label{Rem}
If the interstice $I$ is $k-$sided, it follows from the classical
Teichm\"{u}ller theory that $\mathcal{T}_I$ is diffeomorphic to the
Euclidean space $\mathbb{R}^{k-3}$. See e.g \cite{LV}.
\end{remark}

Denote $\mathcal {T}_{G^*(P)}=\prod_{i=1}^{m}\mathcal {T}_{I_i}$,
where $\{I_1,I_2,\cdots, I_m\}$ are all interstices of the circle
packing $\mathcal P_{0}$. Due to Remark \ref{Rem}, we easily verify
that $\mathcal {T}_{G^*(P)}\cong \mathbb{R}^{2|\mathcal
E|-3|\mathcal V|}$.

\bigskip
Recall that a $K$-disk is defined as the intersection $H^+ \cap K$,
where $H^+$ is an affine half space which intersects $K$. Its
boundary is called a $K$-circle. Naturally, we call $\mathcal
{P}=\{C_{v}:v\in V\}$ a $K$-circle packing, if all $C_{v} (v\in V)$
are $K$-circles. As far as these packings concerned, Liu-Zhou
\cite{Liu-Zhou} have established the following result, which will be
used in this paper as well. It's proof is a combination of the
methods due to Schramm \cite{Schr} and Rodin-Sullivan \cite{RS}.

\begin{lemma}\label{Deform}
Let $K$, $P$, $G^*(P)$ and $\mathcal {T}_{G^*(P)}$ be as above. Suppose $p_1, p_2, p_3$ are three distinct points in $K$.
For any
 \[
 \big[\tau\big]=([\tau_{1}],[\tau_{2}],\cdots,[\tau_{m}]) \in \mathcal {T}_{G^*(P)},
 \]
there exists a unique $K$-circle packing $\mathcal {P}_{K}([\tau])$
realizing the dual graph $G^{\ast}(P)$ with mark
$\{\mathscr{O},p_1,p_2,p_3\}$. Moreover, it's interstice
corresponding to $I_i$ is endowed with the given complex structure
$[\tau_{i}]$, $\:1\leq i\leq m$.
\end{lemma}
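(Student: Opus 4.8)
The plan is to deduce both existence and uniqueness at once by showing that a natural \emph{modulus map} is a homeomorphism onto $\mathcal{T}_{G^{\ast}(P)}$. First I would organize the configuration space. Let $\mathcal{C}$ denote the set of all normalized $K$-circle packings realizing $G^{\ast}(P)$ whose three distinguished tangent points (those labelled by the frame $\mathscr{O}$) are pinned at $p_1,p_2,p_3$. Since a $K$-circle is the boundary of $H^{+}\cap K$ and hence is cut out by an affine plane, it carries three real parameters; with $|\mathcal{F}|=|V|$ circles, one tangency equation for each of the $|\mathcal{E}|$ edges of $G^{\ast}(P)$, and the six real constraints imposed by fixing the three marked points, Euler's formula $|\mathcal{V}|-|\mathcal{E}|+|\mathcal{F}|=2$ yields the formal dimension
\[
3|\mathcal{F}|-|\mathcal{E}|-6=2|\mathcal{E}|-3|\mathcal{V}|=\dim \mathcal{T}_{G^{\ast}(P)}.
\]
I would first check, via the infinitesimal form of the rigidity argument below, that the tangency conditions are transverse, so that $\mathcal{C}$ is a smooth manifold of this dimension. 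I then define the modulus map $\Phi:\mathcal{C}\to \mathcal{T}_{G^{\ast}(P)}$ sending a packing to the tuple $([\tau_1],\dots,[\tau_m])$ of conformal structures induced on its interstices (after identifying $K$ conformally with $\hat{\mathbb C}$ through the stereographic projection $h$). The lemma is precisely the assertion that $\Phi$ is a bijection.

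For uniqueness, i.e. injectivity of $\Phi$, I would adapt Schramm's rigidity argument \cite{Schr} to $K$-circles. Given two packings in $\mathcal{C}$ with the same image $[\tau]$, the prescribed interstice structures make each interstice rigid once its bounding $K$-circles are fixed, so any discrepancy between the two packings must propagate through the whole tangency graph; a combinatorial maximum principle of Schramm type then forces the two packings to coincide. The feature that makes this work is that, because $K$ is strictly convex and $C^3$-close to $\mathbb{S}^2$, the curves $h(\partial(H^{+}\cap K))$ are uniformly close to round circles, so the incompatibility/maximum-principle machinery carries over to the curved setting.

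For existence, i.e. surjectivity, I would argue by covering-space theory. Injectivity together with $\dim\mathcal{C}=\dim\mathcal{T}_{G^{\ast}(P)}$ and invariance of domain shows that $\Phi$ is an open map, hence a local homeomorphism. It remains to show $\Phi$ is proper, which is the crux. Here the Rodin--Sullivan ring lemma \cite{RS} supplies uniform two-sided bounds on the ratios of radii of adjacent circles along any sequence of packings whose moduli converge in $\mathcal{T}_{G^{\ast}(P)}$, so that no circle can collapse to a point, blow up, or lose a tangency; the strict convexity and roundness of $K$ provide the geometric estimates needed to run the ring lemma through $h$, while the hypothesis that the limiting $[\tau]$ lies in $\mathcal{T}_{G^{\ast}(P)}$ rather than at its boundary keeps each interstice non-degenerate. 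A subsequence of the packings then converges to an element of $\mathcal{C}$, giving properness. Finally, since $\mathcal{C}$ is nonempty (the classical packing theorem, or Schramm's ``cage'' theorem, furnishes at least one packing), a proper local homeomorphism onto the connected and simply connected manifold $\mathcal{T}_{G^{\ast}(P)}\cong\mathbb{R}^{\,2|\mathcal{E}|-3|\mathcal{V}|}$ is a covering map and therefore a homeomorphism; this yields both the existence and the uniqueness of $\mathcal{P}_K([\tau])$.

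The step I expect to be the main obstacle is properness. Ruling out degeneration requires genuine quantitative control, namely that radii do not tend to $0$ and that tangency points do not collide as one moves within $\mathcal{T}_{G^{\ast}(P)}$; since the ring lemma is classically stated for Euclidean circles, its adaptation to $K$-circles on a curved strictly convex surface, where ``radius'' and ``tangency'' must be read off through $h$, is the delicate point, and it is exactly here that the combination of the Schramm and Rodin--Sullivan methods, together with the $C^3$-closeness of $K$ to $\mathbb{S}^2$, is indispensable.
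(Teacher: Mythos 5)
Note first that the paper contains no proof of this lemma: it is imported verbatim from Liu--Zhou \cite{Liu-Zhou}, with only the remark that its proof ``is a combination of the methods due to Schramm \cite{Schr} and Rodin--Sullivan \cite{RS}.'' Your proposal is precisely that combination, so at the level of architecture you have reconstructed the intended argument: a modulus map $\Phi$ from normalized $K$-packings to $\mathcal{T}_{G^{\ast}(P)}$, injectivity by a Schramm-type incompatibility/maximum-principle rigidity argument using the prescribed interstice structures, openness by invariance of domain after the (correct) formal dimension count $3|\mathcal{F}|-|\mathcal{E}|-6=2|\mathcal{E}|-3|\mathcal{V}|$ via Euler's formula, properness by Rodin--Sullivan-type ring estimates preventing collapse, blow-up, or loss of tangency, and the closed-plus-open (equivalently covering-space) argument onto the connected, simply connected target $\mathcal{T}_{G^{\ast}(P)}\cong\mathbb{R}^{2|\mathcal{E}|-3|\mathcal{V}|}$, with nonemptiness supplied by an egg-caging existence theorem. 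You also correctly identified properness as the crux.

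One correction of scope, however. The lemma is stated for an arbitrary compact strictly convex surface $K$; no closeness to $\mathbb{S}^2$ is hypothesized (and the lemma is invoked in the paper for every surface $K_s$ in a homotopy, not just those in $\mathcal{B}(\mathbb{S}^2,\epsilon)$). At the two points where you lean on ``$C^3$-close to $\mathbb{S}^2$'' --- to transport the maximum principle and the ring lemma through the projection $h$ --- that hypothesis is simply not available, and as written your argument would only establish the lemma for nearly round $K$. The standard repair, and what the Schramm/Liu--Zhou setting actually uses, is local rather than global roundness: smoothness together with strict convexity implies that small $K$-disks, suitably rescaled, are uniformly ``fat'' convex shapes of bounded eccentricity (blow-ups are sections of the osculating paraboloid), and this uniform fatness is all that the incompatibility machinery and the ring-lemma estimates require. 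With that substitution your sketch is sound and agrees with the cited proof.
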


\bigskip
\section{Proof of the main theorems}\label{Aap}

Recall that $P=P(\mathcal{V},\mathcal{E},\mathcal{F})\subset
\mathbb{R}^3$ is the given convex polyhedron and $P_{\mbox{\large
$\diamond$}}$ is the corresponding truncated polyhedron. To prove the main
theorems, in this section we will construct, step by step, two
configurations spaces $Z_{oc}$, $Z(P_{\mbox{\large $\diamond$}})$
associated to $P$, $P_{\mbox{\large $\diamond$}}$ respectively.

In view of analytic geometry, we know that each affine half space
$H^{+}\subset\mathbb{R}^3$ can be defined as
\[
H^{+}=\{(x,y,u): Ax+By+Cu+D \geq0\}\,\quad (A^2+B^2+C^2\neq0).
\]
Hence each $H^{+}$ is uniquely determined by the exterior unit
normal vector and the intercept. In other words, it's could be
depicted by a point in $\mathbb{S}^2\times \mathbb{R}$.

Let $Z_{\mathcal F}$ denote the space $(\mathbb{S}^2\times
\mathbb{R})^{|\mathcal F|}$. Namely, a point $z_{\mathcal F}\in
Z_{\mathcal F}$ gives a choice of an affine half space (or an
oriented plane) for each $\mbox{\small $f$}\in \mathcal F$.
$Z_{\mathcal F}$ will be called the $\mathcal F$-configuration
space, and a point $z_{\mathcal F}\in Z_{\mathcal F}$ will be called
a $\mathcal F$-configuration. For each $\mathcal F$-configuration
$z_{\mathcal F}\in Z_{\mathcal F}$, we denote by $z_{\mathcal F}(f)$
the oriented plane corresponding to the face $\mbox{\small $f$}\in
\mathcal F$.

For any $e\in \mathcal E$, there are $f_1,f_2\in \mathcal F$ such
that $f_1\cap f_2=e$. Let $Z_{\mathcal Fe}\subset Z_{\mathcal F}$ be
the set of $\mathcal F$-configurations $z_{\mathcal F}$ such that
$z_{\mathcal F}(f_1)$ is parallel to $z_{\mathcal F}(f_2)$.
Moreover, let $Z_{\mathcal FR}\subset Z_{\mathcal F}$ be the set of
$\mathcal F$-configurations $z_{\mathcal F}$ such that the
intersection
$$
z_{\mathcal F}(f_{i_1})\cap z_{\mathcal F}(f_{i_2}) \cap z_{\mathcal F}(f_{i_3})
$$
contains more than one points for at least one triple
$\{i_1,i_2,i_3\} \subset \{1,2,\cdots, |\mathcal F|\}$. Evidently,
both $Z_{\mathcal Fe}$ and $Z_{\mathcal FR}$ are closed in
$Z_{\mathcal F}$, which implies that
\[
Z_{\mathcal FO}=Z_{\mathcal F}\setminus\Big((\cup_{e\in \mathcal E}Z_{\mathcal Fe})\cup Z_{\mathcal FR}\Big)
\]
is open in $Z_{\mathcal F}$. Hence it's a manifold with the same dimension as
$Z_{\mathcal F}$.

\bigskip
Let $Z$ denote the space $Z_{\mathcal FO}\times
\mathbb{R}^{|\mathcal E|}\times \mathbb{R}^{|\mathcal E|}$. Namely,
a point $z\in Z$ gives a choice of a half space(or an oriented
plane) for each $\mbox{\small $f$}\in \mathcal F$, and a choice of
two points corresponding to the vertices $v_1, v_2 \in \mathcal V$
in the line $z_{\mathcal F}(f_1)\cap z_{\mathcal F}(f_2)$, where
$f_1\cap f_2=v_1v_2=e\in\mathcal E$. Similarly, we call $Z$ the
configuration space. In addition, a point $z\in Z $ will be called a
configuration.

For a configuration $z\in Z$, here and hereafter we simply denote by
$z(f)$ the oriented plane corresponding to the face $\mbox{\small
$f$}\in \mathcal F$. Moreover, if $f_1\cap f_2=e \in \mathcal E$,
then we denote by $z(ve)$ the point in $z(f_1)\cap z(f_2)$
corresponding to the vertex $v \in \mathcal V$.

Now let $Z_{oc}\subset Z$ denote the set of configurations $z$ such
that $z(ve_i),z(ve_j),z(ve_k)$ are not collinear whenever
$e_i,e_j,e_k$ are three distinct edges incident to the same vertex
$v\in \mathcal V$. Obviously, $Z_{oc}$ is open in $Z$. Hence,
$Z_{oc}$ is a manifold with the same dimension as $Z$. More
precisely,
\[
dim Z_{oc}= dim Z=3|\mathcal F|+2|\mathcal E|.
\]

For any $v\in \mathcal V$, suppose that $e_1,e_2,\cdots,e_{d(v)}$
are all edges of $P$ incident to the vertex $v$, where $d(v)$ is the
degree of $v$. Denote by $Z_{v}\subset Z_{oc}$ the set of
configurations $z$ such that $z(ve_1),z(ve_2),\cdots, z(ve_{d(v)})$
belong to the same plane. Define
$$
Z(P_{\mbox{\large $\diamond$}})=\cap_{v\in \mathcal V}Z_{v}.
$$
In some cases, a configuration $z\in Z(P_{\mbox{\large
$\diamond$}})$ would correspond to a polyhedron in $\mathbb{R}^3$
combinatorially equivalent to $P_{\mbox{\large $\diamond$}}$.
However, it's worth pointing out that there do exist configurations
corresponding to other intricate geometric patterns as well. Aside
from these complexity, we have:

\begin{lemma}\label{Lem3}
$Z(P_{\mbox{\large $\diamond$}})$ is a closed submanifold of
$Z_{oc}$ with dimension $dim Z(P_{\mbox{\large
$\diamond$}})=3|\mathcal E|+6$.
\end{lemma}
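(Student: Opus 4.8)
The plan is to exhibit $Z(P_{\mbox{\large $\diamond$}})$ as the regular preimage $\Phi^{-1}(0)$ of a smooth map on $Z_{oc}$, so that the submanifold structure, its closedness, and its dimension all follow from the regular value theorem (see \cite{Gui,Hir}). Fix a vertex $v$, label the incident edges $e_1,\dots,e_{d(v)}$, and write $P_i=z(ve_i)$. Since $z\in Z_{oc}$ forbids three of the $P_i$ from being collinear, the points $P_1,P_2,P_3$ span a genuine plane, and coplanarity of all $d(v)$ points is detected by the $d(v)-3$ signed volumes
\[
\phi_{v,j}(z)=\det\bigl(P_2-P_1,\ P_3-P_1,\ P_j-P_1\bigr),\qquad j=4,\dots,d(v).
\]
Setting $\Phi_v=(\phi_{v,4},\dots,\phi_{v,d(v)})$ and $\Phi=(\Phi_v)_{v\in\mathcal V}\colon Z_{oc}\to\mathbb{R}^{N}$ with $N=\sum_{v}(d(v)-3)=2|\mathcal E|-3|\mathcal V|$, we have $Z_v=\Phi_v^{-1}(0)$ and $Z(P_{\mbox{\large $\diamond$}})=\Phi^{-1}(0)$, which is automatically closed in $Z_{oc}$. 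Granting that $0$ is a regular value, $\dim Z(P_{\mbox{\large $\diamond$}})=\dim Z_{oc}-N=(3|\mathcal F|+2|\mathcal E|)-(2|\mathcal E|-3|\mathcal V|)=3(|\mathcal V|+|\mathcal F|)$, and Euler's relation $|\mathcal V|-|\mathcal E|+|\mathcal F|=2$ turns this into $3|\mathcal E|+6$, as claimed.

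The whole task thus reduces to surjectivity of $d\Phi_z$ at every $z\in Z(P_{\mbox{\large $\diamond$}})$. Here the decisive structural feature is that the two endpoints of each edge $e$ contribute independent parameters: the positions of $z(v_1e)$ and $z(v_2e)$ along the line $z(f_1)\cap z(f_2)$ live in the two different $\mathbb{R}^{|\mathcal E|}$ factors of $Z$. Consequently the point-coordinates used by distinct vertices are disjoint, and moving the coordinate of a single $z(ve)$ slides only that point along a fixed line, altering $\Phi_v$ but none of the $\Phi_{v'}$ with $v'\neq v$. It therefore suffices to prove, one vertex at a time, that $d(\Phi_v)_z$ is surjective already when restricted to the point-coordinates attached to $v$; the vertex blocks then assemble into a surjection and simultaneously certify that the $Z_v$ meet transversally, so that the codimensions add.

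Fix $v$, let $\Pi$ be the common plane of $P_1,\dots,P_{d(v)}$ with unit normal $\mathbf n$, let $\mathbf d_i$ be the direction of the line carrying $P_i$, and let $s_i$ be the coordinate of $P_i$ along that line. Differentiating and using that $P_i-P_1$ is parallel to $\Pi$ on $Z_v$, one finds $\partial\phi_{v,j}/\partial s_k=0$ for $k\geq4,\ k\neq j$, while the surviving entries are scalar multiples of $\mathbf d_k\cdot\mathbf n$; in particular $\partial\phi_{v,j}/\partial s_j=\bigl((P_2-P_1)\times(P_3-P_1)\bigr)\cdot\mathbf d_j$ is a nonzero multiple of $\mathbf d_j\cdot\mathbf n$. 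Hence, choosing the three reference edges so as to include every edge whose line lies inside $\Pi$ (possible exactly when at most three do), all non-reference lines are transverse to $\Pi$ and the Jacobian of $\Phi_v$ in the non-reference point-coordinates is diagonal with nonvanishing entries, hence surjective.

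The \textbf{main obstacle} is therefore the purely geometric bound: at each configuration of $Z(P_{\mbox{\large $\diamond$}})$, at most three edges $e_i$ at $v$ satisfy $\ell_i=z(f^{(i)})\cap z(f^{(i+1)})\subset\Pi$. The favorable cases follow from the exclusions built into $Z_{oc}$ and $Z_{\mathcal FO}$. If two cyclically consecutive lines $\ell_i,\ell_{i+1}$ both lie in $\Pi$, they are distinct (a common line would sit in three face-planes, violating $Z_{\mathcal FR}$) and both lie in the shared face-plane $z(f^{(i+1)})$, forcing $\Pi=z(f^{(i+1)})$; but a single face-plane can contain the lines of only the two edges of that face at $v$ (a third would again lie in three face-planes), so then at most two lines are interior. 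Thus a bad set of size $\geq4$ must be an independent set in the cyclic edge-arrangement around $v$ with $\Pi$ equal to no face-plane, which already settles every vertex with $d(v)\le 7$. The remaining high-degree case is the delicate point: one must show, again through $Z_{\mathcal FR}$ (no line common to three face-planes, no triple of face-planes meeting in more than a point), that four or more pairwise non-adjacent edge-lines cannot be forced into a single non-face plane. I expect this configurational estimate, rather than any of the transversality bookkeeping, to be where the real work lies.
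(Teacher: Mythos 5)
Your overall route is exactly the paper's route: exhibit $Z(P_{\mbox{\large $\diamond$}})$ locally as the common zero set of $d(v)-3$ coplanarity determinants per vertex, invoke the regular value theorem for closedness and the submanifold structure, and get the dimension from $3|\mathcal F|+2|\mathcal E|-(2|\mathcal E|-3|\mathcal V|)=3|\mathcal E|+6$ via Euler's formula; your reduction to the functions $\phi_{v,j}$ with a fixed non-collinear reference triple (legitimate, since $Z_{oc}$ makes \emph{every} triple at $v$ non-collinear) is the same device as the paper's determinants $R$. But your proof contains a genuine gap, which you yourself flag: surjectivity of $d\Phi_v$ is established only when at most three of the edge-lines at $v$ lie in the common plane $\Pi$, and the exclusion of a bad set of four or more pairwise non-adjacent lines lying in a single non-face plane (possible only for $d(v)\geq 8$, hence relevant for the odd degrees $\geq 9$ that the lemma does not exclude) is left as an unproven ``configurational estimate.'' Nothing in the definitions of $Z_{\mathcal FO}$ and $Z_{oc}$ appears to forbid such a configuration: those conditions only rule out a line lying in three face-planes and three collinear points, and four pairwise non-adjacent lines $\ell_i=z(f^{(i)})\cap z(f^{(i+1)})$ can lie in an auxiliary plane $\Pi$ without violating either. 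At such a point your Jacobian in the $s$-coordinates has identically zero bad columns (each entry a multiple of $\mathbf d_j\cdot\mathbf n=0$), so the argument as written stops.

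The missing idea is that you have restricted the differential to the $2|\mathcal E|$ point-sliding coordinates and discarded the $3|\mathcal F|$ plane coordinates of $Z_{oc}$. In your bad case $\Pi$ is not a face-plane, so for a bad edge $e_j$ with $\ell_j=z(f)\cap z(f')$ both face-planes are distinct from $\Pi$ and meet $\Pi$ exactly along $\ell_j$; an infinitesimal tilt of $z(f)$ therefore moves $P_j$ off $\Pi$ and makes $d\phi_{v,j}$ nonzero --- at the price of side effects on the other incidences of the face $f$, which must then be corrected by $s$-moves, and this coupling between vertex blocks (the very thing your diagonal scheme was designed to avoid) requires an actual spanning argument. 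This is, in effect, what the paper's own proof presupposes: it deduces regularity from non-collinearity because the gradient of $R$ in the fourth point is the nonzero cross product $(P_2-P_1)\times(P_3-P_1)$, i.e.\ it implicitly moves that point freely in $\mathbb{R}^3$, which inside $Z_{oc}$ is only available once the face-plane parameters are brought into play. So your analysis is more scrupulous than the paper's one-line regularity claim --- it isolates precisely the degeneracy that claim glosses over --- but as a proof it is incomplete at exactly the step you identify, and the repair lies not in a finer configurational estimate but in enlarging the Jacobian to include the plane coordinates.
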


\begin{proof}
As above, let $e_1,e_2,\cdots,e_{d(v)}$ be the edges of the
polyhedron $P$ emanating from $v$. For each $i=1,2,\cdots, d(v)$,
denote $z(ve_i)=(x_i,y_i,u_i)$.

Consider the matrix
$$
\begin{pmatrix}
 x_1&y_1&u_1&1\\
 x_2&y_2&u_2&1\\
 x_3&y_3&u_3&1\\
 \vdots&\vdots&\vdots&\vdots\\
 x_{d(v)}&y_{d(v)}&u_{d(v)}&1
\end{pmatrix}
$$
Then $z(ve_1),z(ve_2),\cdots, z(ve_{d(v)})$ belong to the
same plane if and only if the rank of the above matrix is less than
$4$. Equivalently, the determinant
 $$
 R(z(ve_{i_1}),z(ve_{i_2}), z(ve_{i_3}), z(ve_{i_4}))=
 \left|\begin{array}{cccc}
 x_{i_1}&y_{i_1}&u_{i_1}&1\\
 x_{i_2}&y_{i_2}&u_{i_2}&1\\
 x_{i_3}&y_{i_3}&u_{i_3}&1\\
 x_{i_4}&y_{i_4}&u_{i_4}&1
 \end{array}\right|=0.
 $$
for each subset $\{i_1,i_2,i_3,i_4\} \subset \{1,2,\cdots, d(v)\}$.

In view of the definition of the space $Z_{oc}$, it follows that
$z(ve_{j_1}),z(ve_{j_2}),z(ve_{j_3})$ aren't collinear for any three
different subscripts $\{j_1,j_2,j_3\}\subset \{1,2,\cdots, d(v)\}$.
That implies that $0$ is the regular value of the smooth function
$R(ve_{i_1},ve_{i_2},ve_{i_3},ve_{i_4})$. Owing to the regular value
theorem \cite{Hir}, $Z(P_{\mbox{\large $\diamond$}})$ is then a
closed submanifold of $Z_{oc}$. Moreover, we have
$$
dim Z(P_{\mbox{\large $\diamond$}})=3|\mathcal F|+2|\mathcal E|-(
  \sum\nolimits_{v\in \mathcal V}
 d(v)-3)=3|\mathcal F|+2|\mathcal E|-(2|\mathcal E|-3|\mathcal V|)=3|\mathcal E|+6,
$$
where the last identity comes from Euler's formula.
\end{proof}

Let $K$ be a given compact strictly convex surface. Choose a
combinatorial frame $\mathscr{O}$ for $G^{\ast}(P)$ and three
different points $p_1,p_2,p_3$ in $K$. For each $[\tau]\in \mathcal
{T}_{G^{\ast}(P)}$, from Lemma \ref{Deform}, it follows that there
is a unique normalized $K$-circle packing $\mathcal{P}_{K}([\tau])$
realizing the graph $G^{\ast}(P)$ with the mark
$\mathfrak{M}=\{\mathscr{O},p_1,p_2,p_3\}$.

Note that $G^{\ast}(P)=(V,E,F)$ is the dual graph of the polyhedral
graph $G(P)=(\mathcal V, \mathcal E, \mathcal F)$. For any $f \in
\mathcal F$, then $f^{\ast} \in V$. For the $K$-circle packing
$\mathcal{P}_{K}([\tau])$, denote by $H^{+}(f^\ast)$ the oriented
plane corresponding to the vertex $f^\ast \in V$.

If $e \in \mathcal E$, then $e^{\ast} \in E$. In addition, let
$p_{e^{\ast}}\in K$ be the tangent points associating with the edge
$e^{\ast} \in E$. We now associate $\mathcal{P}_{K}([\tau])$ with a
configuration $z(\tau)\in Z_{oc}$ such that
$z(\tau)(f)=H^{+}(f^\ast)$ and
$z(\tau)(v_{1}e)=z(\tau)(v_{2}e)=p_{e^{\ast}}$. Consequently, it
gives rise to the following mapping:
\[
f_{K,\mathfrak{M}}:\mathcal {T}_{G^{\ast}(P)}\longrightarrow
Z_{oc}\hookrightarrow Z.
\]

Furthermore, an elementary calculation gives
$$
\begin{aligned}
&dim Z(P_{\mbox{\large $\diamond$}})=3|\mathcal F|+2|\mathcal E|-(2|\mathcal E|-3|\mathcal V|)=3|\mathcal E|+6,\\
&dim \mathcal {T}_{G^{\ast}(P)}=2|\mathcal E|-3|\mathcal V|,\\
&dim \mathcal {T}_{G^{\ast}(P)}+dim Z(P_{\mbox{\large $\diamond$}})=3|\mathcal F|+2|\mathcal E|=dim Z_{oc}.\\
\end{aligned}
$$

These identities remind us of the intersection number theory. In
order to apply this tool, it's necessary to find a proper compact
set $\Lambda\subset \mathcal{T}_{G^{\ast}(P)}$ such that
$f_{K,\mathfrak{M}}(\partial \Lambda)\cap Z(P_{\mbox{\large
$\diamond$}})=\emptyset$.

Given $\epsilon >0$, we denote by $\mathcal
{B}(\mathbb{S}^2,\epsilon)$ the set of compact convex surfaces which
are $\epsilon$-$C^3$-close to the unit sphere $\mathbb{S}^2$.

\begin{lemma}\label{Lem2}
Assume that $d(v)$ is odd for every $v\in \mathcal V$. Then there
exists $\epsilon >0$ such that: Any $K\in \mathcal
{B}(\mathbb{S}^2,\epsilon)$ is convex and there is a compact set
$\Lambda \subset \mathcal {T}_{G^{\ast}(P)}$ such that
$f_{K,\mathfrak{M}}(\partial \Lambda)\cap Z(P_{\mbox{\large
$\diamond$}})=\emptyset$.
\end{lemma}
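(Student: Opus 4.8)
The plan is to reduce the lemma to a single uniform properness statement and then prove that statement by a degeneration analysis in Teichm\"uller space. Concretely, I claim it suffices to produce $\epsilon>0$ and $R>0$ so that for every $K\in\mathcal{B}(\mathbb{S}^2,\epsilon)$ one has $f_{K,\mathfrak{M}}^{-1}\big(Z(P_{\mbox{\large $\diamond$}})\big)\subset B_R$, where $B_R$ is the open ball of radius $R$ in $\mathcal{T}_{G^{\ast}(P)}\cong\mathbb{R}^{2|\mathcal{E}|-3|\mathcal{V}|}$. Granting this, put $\Lambda=\overline{B_R}$; then $\partial\Lambda$ is disjoint from $f_{K,\mathfrak{M}}^{-1}\big(Z(P_{\mbox{\large $\diamond$}})\big)$, which is precisely $f_{K,\mathfrak{M}}(\partial\Lambda)\cap Z(P_{\mbox{\large $\diamond$}})=\emptyset$. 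The strict convexity of $K$ for small $\epsilon$ is already guaranteed by the $C^3$-closeness recorded before the lemma, so the whole content is the uniform boundedness of the preimage.

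I would prove this by contradiction, folding the perturbation of $K$ into the same compactness argument. If no such $\epsilon,R$ existed, there would be sequences $K_n\to\mathbb{S}^2$ in $C^3$ and $[\tau^{(n)}]\in\mathcal{T}_{G^{\ast}(P)}$ with $|[\tau^{(n)}]|\to\infty$ and $f_{K_n,\mathfrak{M}}([\tau^{(n)}])\in Z(P_{\mbox{\large $\diamond$}})$. Using the product decomposition $\mathcal{T}_{G^{\ast}(P)}=\prod_{i=1}^{m}\mathcal{T}_{I_i}$ with $\mathcal{T}_{I_i}\cong\mathbb{R}^{d(v_i)-3}$ (Remark \ref{Rem}), after passing to a subsequence I may assume that some interstice $I_{i_0}$, dual to a vertex $v_0\in\mathcal{V}$, has degenerating modulus $|[\tau^{(n)}_{i_0}]|\to\infty$. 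Since $Z(P_{\mbox{\large $\diamond$}})\subseteq Z_{v_0}$, for every $n$ the $d(v_0)$ tangency points $p_{e_1^{\ast}},\dots,p_{e_{d(v_0)}^{\ast}}$ bordering $I_{i_0}$ are coplanar on $K_n$; as $K_n\to\mathbb{S}^2$ their limit points must therefore lie on a single round circle of $\mathbb{S}^2$, respecting the cyclic order inherited from $\partial I_{i_0}$.

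The crux, and the step I expect to be the main obstacle, is to show that this persistence of cocircularity is incompatible with the degeneration $|[\tau^{(n)}_{i_0}]|\to\infty$. For this I would control the $K_n$-circle packings $\mathcal{P}_{K_n}([\tau^{(n)}])$ as a modulus blows up, using the uniqueness together with the Schramm and Rodin--Sullivan compactness underlying Lemma \ref{Deform} to identify the limiting degenerate packing: a blown-up Teichm\"uller coordinate of a conformal $d(v_0)$-gon forces its cyclically ordered boundary marks to collapse into separated clusters. One then argues that the resulting clustered limit of $p_{e_1^{\ast}},\dots,p_{e_{d(v_0)}^{\ast}}$ cannot lie on one round circle in the prescribed cyclic pattern. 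The odd-degree hypothesis is available precisely here: when $d(v_0)$ is odd the two sides of a pinching cannot carry equal numbers of marks, and I expect this parity to obstruct the collapsed configuration from being cocircular, whereas for even degree the clusters could balance. Making this quantitative---bounding the coplanarity defect from below in terms of $|[\tau^{(n)}_{i_0}]|$, and excluding the residual degenerate cocircular limits via the open non-collinearity conditions that cut out $Z_{oc}$---is the technical heart of the proof.

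The contradiction so obtained establishes the uniform bound, and with it the lemma. I would organize the degeneration analysis one interstice at a time, so that only a single conformal $d(v_0)$-gon approaching a boundary stratum of $\mathcal{T}_{I_{i_0}}\cong\mathbb{R}^{d(v_0)-3}$ needs to be understood; the remainder of the packing enters solely through the joint continuity of $(K,[\tau])\mapsto\mathcal{P}_{K}([\tau])$ furnished by Lemma \ref{Deform}, which is also what permits the radius $R$ to be chosen once and for all on a fixed $C^3$-neighborhood of $\mathbb{S}^2$, as required for the uniformity over $K\in\mathcal{B}(\mathbb{S}^2,\epsilon)$.
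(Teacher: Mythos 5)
Your global architecture matches the paper's: reduce the lemma to relative compactness of the preimage $f_{K,\mathfrak{M}}^{-1}\big(Z(P_{\mbox{\large $\diamond$}})\big)$ in $\mathcal{T}_{G^{\ast}(P)}$, argue by contradiction along a degenerating sequence of packings, and obtain uniformity over $K$ by continuity (the paper simply reduces to $K=\mathbb{S}^2$ first; folding $K_n\to\mathbb{S}^2$ into the sequence is an acceptable variant). But the step you yourself flag as the technical heart is genuinely missing, and the mechanism you propose for it does not work. You suggest that when the modulus of $I_{i_0}$ blows up the tangency points collapse into clusters, and that oddness of $d(v_0)$ should prevent the clustered limit from being cocircular. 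Cocircularity, however, is a closed condition that survives clustering: points collapsing to two or three locations lie on round circles in abundance, in any cyclic order, so no parity count of marks obstructs the limit configuration. The hypothesis ``$d(v)$ odd'' enters the paper's proof in a different and entirely non-asymptotic way, at each \emph{fixed} configuration $z\in f_{0}(\mathcal{T}_{G^{\ast}(P)})\cap Z(P_{\mbox{\large $\diamond$}})$, before any limit is taken: the coplanar tangency points $p_{e_{1}^{\ast}},\dots,p_{e_{d(v)}^{\ast}}$ lie on a circle $C_v$, and since consecutive petal circles $C_{f_{i-1}^{\ast}},C_{f_{i}^{\ast}}$ are tangent at $p_{e_{i}^{\ast}}$, the intersection angles $\theta_i$ of $C_v$ with the petals satisfy the supplementary relation $\theta_{i-1}+\theta_{i}=\pi$ around the flower; an odd cycle forces $\theta_{1}=\cdots=\theta_{d(v)}=\pi/2$. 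Hence $C_v$ is orthogonal to every petal and the common tangent lines $l_{e_{i}^{\ast}}$ are concurrent at the center of $C_v$. This rigid radial pattern is exactly what kills your degeneration: the tangent lines separate the non-adjacent arcs of the interstice at $v$, so (given a positive lower bound on circle sizes) non-adjacent boundary arcs cannot approach each other and the interstice modulus cannot blow up. Without this identity, your claim that the coplanarity defect is bounded below in terms of $|[\tau^{(n)}_{i_0}]|$ is unsupported, and for even $d(v)$ it is in fact false --- which is why the paper remarks that the argument breaks precisely there.

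There is a second omission: you analyze only the pinching of a single conformal polygon, but a sequence in the preimage can also degenerate by circles shrinking to points, and that mode is not captured by your one-interstice cluster analysis. The paper treats it as a separate case and disposes of it with the normalization $\mathfrak{M}$: three circles with pairwise disjoint interiors cannot pass through a common point, so if one circle collapses then all but at most two must collapse, contradicting the three marked tangency points $p_1,p_2,p_3$. Moreover, the resulting lower bound on circle sizes is an input to the separation argument above, so the two cases are intertwined and you need both. In short, your reduction and compactness framing are sound, but the proof has a hole exactly where ``$d(v)$ odd'' must be used, and the parity-of-clusters heuristic you offer in its place is not a viable substitute for the orthogonality computation.
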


\begin{proof}
Due to the continuity, the above lemma will be deduced if we could
prove the existence of $\Lambda$ such that
$f_{K,\mathfrak{M}}(\partial \Lambda)\cap Z(P_{\mbox{\large
$\diamond$}})=\emptyset$ for $K=\mathbb{S}^2$.

To simplify notations, let $f_0=f_{\mathbb{S}^2,\mathfrak{M}}$. Note
that a configuration $z \in f_0({T}_{G^{\ast}(P)})\cap
Z(P_{\mbox{\large $\diamond$}})$ corresponds to an ideal polyhedron
with skeleton combinatorially equivalent to $G_{+}(P)$. We could
consider this ideal polyhedron as a circle packing $\mathcal{P}_{0}$
on the Riemann sphere realizing $G^{\ast}(P)$. For any $v\in
\mathcal V$, we assume that $e_1,e_2,\cdots,e_{d(v)}$ are the edges
of the ideal polyhedron emanating from $v$. Let
$p_{e_{1}^{\ast}},p_{e_{2}^{\ast}},\cdots,p_{e_{d(v)}^{\ast}}$ be
the corresponding tangent points of the packing $\mathcal{P}_{0}$.
It follows that
$p_{e_{1}^{\ast}},p_{e_{2}^{\ast}},\cdots,p_{e_{d(v)}^{\ast}}$ are
contained in a common plane, which implies that they are contained
in a common circle $C_v$.

\begin{figure}[htbp]\centering
\includegraphics[width=0.63\textwidth]{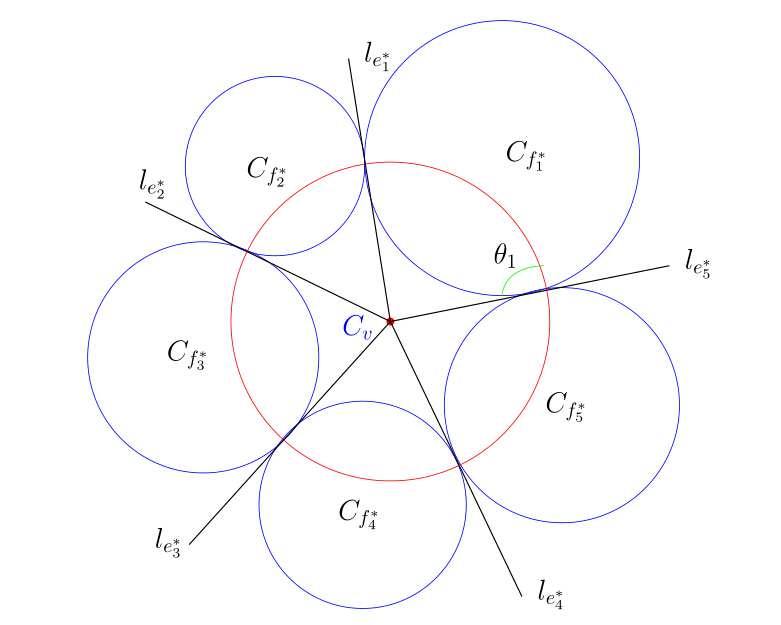}
\caption{}
\end{figure}

For $i\in \{1,2,\cdots,d(v)\}$, let $\theta_i$ be the dihedral angle
between $C_v$ and $C_{f_{i}^{\ast}}$, where $C_{f_{i}^{\ast}}\in
\mathcal{P}_{0}$ is the circle that contains the tangent points
$p_{e_{i}^{\ast}},p_{e_{i+1}^{\ast}}$. Since $d(v)$ is odd, a simple
computation then shows that

\begin{equation}\label{equation}
\theta_1=\theta_2=\cdots=\theta_{d(v)}=\frac{\pi}{2}.
\end{equation}

This implies that the lines
$l_{e_{1}^{\ast}},l_{e_{2}^{\ast}},\cdots,l_{e_{d(v)}^{\ast}}$
intersect at the center of the circle $C_v$, where
$l_{e_{1}^{\ast}}, 1\leq i\leq d(v)$, is the common tangent line of
the circles $\{C_{f_{i}^{\ast}}, C_{f_{i+1}^{\ast}}\}$ in the
packing ${\mathcal P}_0$.

Now we assume, by contradiction, that there is not such a compact
set $\Lambda$. Then there is a sequence of $[\tau]_{n}\in
f_{0}^{-1}(Z(P_{\mbox{\large $\diamond$}}))$ such that the
corresponding normalized packings $\mathcal {P}_{n}=\mathcal
P(\tau_{n})$ satisfy one of the follow two possibilities:

\begin{itemize}
\item{}As $n\rightarrow \infty$, there exists $f^{\ast} \in V$, such that the corresponding circles
$\{{C}_{n,f^{\ast}}\}$ in the packings $\mathcal{P}_n$ tends to a
point;
\item{} For some $v^{\ast}$, as $n\rightarrow \infty$, the distance of two non-adjacent arcs of the interstice $I_{v^{\ast},n}$
of the packings $\mathcal{P}_n$ tends to zero.
\end{itemize}

In the first case, suppose that there exists at least one circle
tending to a point. Note that any three circles with disjoint
interiors can not meet at a common point. Therefore, all circles in
the packing sequence $\mathcal{P}_n$ will degenerate to points,
except for at most two circles. It contradicts to our normalization
conditions. We thus rule out the first possibility.

Now we turn to the second case. Noting that $[\tau]_{n}\in
f_{0}^{-1}(Z(P_{\mbox{\large $\diamond$}}))$, they correspond to a
sequence of ideal polyhedra $P_{n}$. Hence the tangent lines of the
packings $\mathcal{P}_n$ will separate the non-adjacent arcs. On the
other hand, we have known that the sizes of all circles in
$\mathcal{P}_n$ have positive infimum. These facts tell us that the
distance of such non-adjacent arcs can't tend to zero, which rules
out the second possibility.
\end{proof}

\begin{remark}
It's worth pointing out that Equation \eqref{equation} wouldn't hold
any more if $d(v)$ is even for some $v \in \mathcal V$. In fact,
this seems to be the main obstruction on why we couldn't extend
Theorem \ref{Main0} and Theorem \ref{Main1} to more general cases.
\end{remark}

Assume that $K\in \mathcal {B}(\mathbb{S}^2,\epsilon)$. If we could
prove $I(f_{K,\mathfrak{M}},\Lambda,Z(P_{\mbox{\large
$\diamond$}}))\neq 0$, then Theorem \ref{Thm3} implies that
$f^{-1}_{K,\mathfrak{M}}(Z(P_{\mbox{\large $\diamond$}}))\cap
\Lambda \neq \emptyset$, which proves Theorem \ref{Main1}. Recalling
Theorem \ref{Thm1}, to determine the intersection numbers, let's use
a homotopy method.

Note that $K$ is a given compact strict convex surface. Without
loss of generality, we assume that its diameter is larger than $1$.
Furthermore, assume that the unit sphere $\mathbb{S}^{2}$ is
internally tangent to $K$ at the point $N=(0,0,1)$. Then $N=(0,0,1)$
could be considered as the common "North Pole" of $\mathbb{S}^2$ and
$K$.

\bigskip
Let $h_0$, $h_1$ be the "stereographic projections" for
$\mathbb{S}^{2}$, $K$ respectively.
Define a one parameter family of closed surfaces by
\[
\{s \cdot h_1^{-1}(z)+(1-s) \cdot h_0^{-1}(z): z \in \hat{\mathbb
C}\}.
\]
For each $s \in [0,1]$, the above set is a compact strictly convex
surface in $\mathbb R^3$. Denote it by $K_s$ Then $\{K_s\}_{1\leq s
\leq 1}$ is a family of compact strictly convex surface joining
$\mathbb{S}^2$ and $K$. Similarly, we can endow the smooth convex
surface $K_s$ with the complex structure $\hat{\mathbb{C}}$ for each
$s \in [0,1]$ by the "stereographic projection". Moreover, with the
help of Lemma \ref{Deform}, we could construct a mapping
\[
 f_{s}=f_{K_s,\mathfrak{M}}:\mathcal{T}_{G^{\ast}(P)}\rightarrow Z_{oc},
\]
which is a homotopy from $f_{0}$ to $f_{K,\mathfrak{M}}$. Furthermore, if $K\in
\mathcal {B}(\mathbb{S}^2,\epsilon)$, from Lemma \ref{Lem2} it
follows that there exists $\Lambda \subset
\mathcal{T}_{G^{\ast}(P)}$ such that $f_s(\partial \Lambda)\cap
Z(P_{\mbox{\large $\diamond$}})=\emptyset$ for all $s \in [0,1] $.
Furthermore, we conclude that:

\begin{theorem}\label{Thm2}
Suppose that $d(v)$ is odd for each $v\in \mathcal V$.
Given any $K\in \mathcal {B}(\mathbb{S}^2,\epsilon)$, then $I(f_{K,\mathfrak{M}},\Lambda,Z(P_{\mbox{\large $\diamond$}}))=1$.
\end{theorem}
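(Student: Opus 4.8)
The plan is to reduce the computation to the round sphere by the homotopy invariance of the intersection number, and then to evaluate the sphere case by combining Rivin's rigidity with a transversality argument. With $\Lambda$ fixed as above, the family $\{f_s=f_{K_s,\mathfrak{M}}\}_{s\in[0,1]}$ is a continuous homotopy from $f_0=f_{\mathbb{S}^2,\mathfrak{M}}$ to $f_{K,\mathfrak{M}}$ taking values in $Z_{oc}\hookrightarrow Z$, and by construction $f_s(\partial\Lambda)\cap Z(P_{\mbox{\large $\diamond$}})=\emptyset$ for every $s\in[0,1]$. Hence Theorem \ref{Thm1} applies and gives
$$
I(f_{K,\mathfrak{M}},\Lambda,Z(P_{\mbox{\large $\diamond$}}))=I(f_0,\Lambda,Z(P_{\mbox{\large $\diamond$}})),
$$
so it suffices to prove that $I(f_0,\Lambda,Z(P_{\mbox{\large $\diamond$}}))=1$.

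Next I would identify the fibre $f_0^{-1}(Z(P_{\mbox{\large $\diamond$}}))\cap\Lambda$ and show that it consists of a single point. As recorded in the proof of Lemma \ref{Lem2}, each configuration in $f_0(\mathcal{T}_{G^{\ast}(P)})\cap Z(P_{\mbox{\large $\diamond$}})$ comes from a circle packing of $\mathbb{S}^2$ with contact graph $G^{\ast}(P)$ whose tangent points about every vertex are concyclic; since every $d(v)$ is odd, Equation \eqref{equation} forces all the relevant dihedral angles to equal $\pi/2$, so such a configuration is precisely an ideal right-angled polyhedron realizing $G_{+}(P)$. By Rivin's theorem (Theorem \ref{Thm}) such a polyhedron exists and is unique up to M\"{o}bius transformations of $\mathbb{S}^2$, and the mark $\mathfrak{M}$---fixing the three tangent points $p_1,p_2,p_3$ and thereby rigidifying the $6$-dimensional M\"{o}bius group---selects a unique representative $[\tau_\ast]$. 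Hence $f_0^{-1}(Z(P_{\mbox{\large $\diamond$}}))\cap\Lambda=\{[\tau_\ast]\}$.

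It then remains to show that $f_0$ meets $Z(P_{\mbox{\large $\diamond$}})$ transversally at $[\tau_\ast]$ with sign $+1$, so that $I(f_0,\Lambda,Z(P_{\mbox{\large $\diamond$}}))=sgn(f_0,Z(P_{\mbox{\large $\diamond$}}))_{[\tau_\ast]}=1$. Because $\dim\mathcal{T}_{G^{\ast}(P)}+\dim Z(P_{\mbox{\large $\diamond$}})=\dim Z_{oc}$, transversality is equivalent to $Im(df_0)_{[\tau_\ast]}\cap T_{f_0([\tau_\ast])}Z(P_{\mbox{\large $\diamond$}})=\{0\}$, that is, to the infinitesimal rigidity of the right-angled ideal polyhedron: no nonzero Teichm\"{u}ller deformation preserves all the concyclicity conditions to first order. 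The main obstacle is the explicit linear-algebra computation behind this step---writing down $df_0$ (the first-order deformation of the packing in the Teichm\"{u}ller coordinates) together with the linearized concyclicity equations cutting out $T Z(P_{\mbox{\large $\diamond$}})$, checking that their images are complementary in $T Z_{oc}$, and fixing orientations so that the resulting sign is $+1$. I would carry out this computation separately (as deferred to the final section); combined with the previous two steps it yields $I(f_{K,\mathfrak{M}},\Lambda,Z(P_{\mbox{\large $\diamond$}}))=1$.
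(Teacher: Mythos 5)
Your first step coincides with the paper's proof verbatim: Theorem \ref{Thm1} applied to the homotopy $f_s=f_{K_s,\mathfrak{M}}$, with $\Lambda$ supplied by Lemma \ref{Lem2}, reduces everything to showing $I(f_{0},\Lambda,Z(P_{\mbox{\large $\diamond$}}))=1$, which is exactly the content of Proposition \ref{Prop2}. Your identification of the fibre is also essentially right (modulo attribution: Theorem \ref{Thm} is only an existence criterion; the uniqueness up to M\"{o}bius transformations comes from the rigidity results of \cite{Rivin1}, which is what the paper actually invokes).

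The genuine gap is in your third step, and it is precisely the part you defer: the transversality-and-sign computation at $[\tau_\ast]$. You propose to linearize $f_0$ directly at the tangency configuration $z_0=f_0([\tau_\ast])$ and check that $Im(df_0)_{[\tau_\ast]}\cap T_{z_0}Z(P_{\mbox{\large $\diamond$}})=\{0\}$. But $z_0$ is exactly the degenerate point where the available description of $T_{z}Z(P_{\mbox{\large $\diamond$}})$ breaks down: at $w_0=0$ the two endpoint markers of every ordinary edge coincide, $z(v_1e)=z(v_2e)=p_{e^{\ast}}$, so the configuration is a collapsed limit of truncated polyhedra, and the parametrization $\Psi:PO(3,1)\times U\rightarrow Z(P_{\mbox{\large $\diamond$}})$ by dihedral angles and defect curvatures (built from Lemmas \ref{Lem5} and \ref{Lem6}, with $U$ relatively open in $(0,\pi/2]^{3|\mathcal E|}$) is only available for strictly positive weights; $z_0$ sits on the boundary of its range. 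The paper deliberately avoids working at $f_0$: it introduces a \emph{second} homotopy $w_s=sw+(1-s)w_0$ with $w\in W$ strictly positive and close to $w_0$, verifies via the Lemma \ref{Lem2} argument that $f_{w_s,\mathfrak{M}}(\partial\Lambda)\cap Z(P_{\mbox{\large $\diamond$}})=\emptyset$ for all $s$, proves $f_{w,\mathfrak{M}}\mbox{\Large$\pitchfork$}Z(P_{\mbox{\large $\diamond$}})$ there (Proposition \ref{Prop3}, where a nonzero tangent vector in the intersection would force a nontrivial change of some defect curvature $k(v_ie_j)$, hence push a vertex off $\mathbb{S}^2$, while vectors in $Im(df_{w,\mathfrak{M}})$ come from bending deformations keeping vertices on the sphere), counts the unique intersection point via hyperideal rigidity, and only then transports the count back to $f_0$ by Theorem \ref{Thm1}. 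So your plan, as stated, asks for an infinitesimal rigidity statement for the right-angled/midscribed configuration at $w=0$ that neither you nor the paper proves; to close the argument you should either establish that statement directly or adopt the paper's perturbation-in-$w$ route. (A smaller point, shared with the paper: with arbitrary orientation conventions a single transverse point gives $I=\pm1$; for the application to Theorem \ref{Main1} only $I\neq0$ is needed, so the unproved normalization to $+1$ is harmless but should be flagged.)
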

\begin{proof}

Due to Theorem \ref{Thm1}, it's necessary to calculate
$I(f_{0},\Lambda,Z(P_{\mbox{\large $\diamond$}}))$. From the
following Proposition \ref{Prop2}, we have
$I(f_{0},\Lambda,Z(P_{\mbox{\large $\diamond$}}))=1$. It thus
completes the proof.
\end{proof}

\begin{proposition}\label{Prop2}
Suppose that $d(v)$ is odd for any $v\in \mathcal V$.
Then $I(f_{0},\Lambda,Z(P_{\mbox{\large $\diamond$}}))=1$
\end{proposition}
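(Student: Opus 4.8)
The plan is to show that $f_0$ is transverse to $Z(P_{\mbox{\large $\diamond$}})$ along $\Lambda$, that $f_0^{-1}(Z(P_{\mbox{\large $\diamond$}}))\cap\Lambda$ consists of a single interior point, and that its local intersection sign is $+1$; the equality $I(f_0,\Lambda,Z(P_{\mbox{\large $\diamond$}}))=1$ then follows at once from the definition of the intersection number.

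First I would identify the intersection set. A point of $f_0^{-1}(Z(P_{\mbox{\large $\diamond$}}))\cap\Lambda$ is a normalized $\mathbb{S}^2$-circle packing $\mathcal{P}_0$ realizing $G^{\ast}(P)$ for which, at every vertex $v\in\mathcal V$, the $d(v)$ tangent points around $v$ are concyclic, lying on a circle $C_v$. As in the proof of Lemma \ref{Lem2}, concyclicity combined with the oddness of $d(v)$ forces, through \eqref{equation}, that $C_v$ meets each adjacent packing circle $C_{f_i^{\ast}}$ orthogonally. Orthogonality of all vertex circles $\{C_v\}_{v\in\mathcal V}$ to all face circles $\{C_{f^{\ast}}\}_{f\in\mathcal F}=\mathcal P_0$ means precisely that these two families form an orthogonal primal--dual packing, i.e. the midscribed realization of $P$ with all edges tangent to $\mathbb{S}^2$. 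By the Koebe--Andreev--Thurston theorem such a realization exists (so the intersection is nonempty) and is unique up to M\"{o}bius transformations of $\hat{\mathbb C}$ (cf. \cite{Schr,RS}, and the rigidity underlying \cite{Rivin1}); the mark $\mathfrak{M}=\{\mathscr{O},p_1,p_2,p_3\}$ removes the M\"{o}bius freedom, leaving a single point $[\tau_0]$. Since $f_0(\partial\Lambda)\cap Z(P_{\mbox{\large $\diamond$}})=\emptyset$ by Lemma \ref{Lem2}, this point lies in the interior of $\Lambda$.

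Next I would establish transversality at $[\tau_0]$. Because $\dim\mathcal{T}_{G^{\ast}(P)}+\dim Z(P_{\mbox{\large $\diamond$}})=\dim Z_{oc}$, it suffices to prove that the composite $\pi\circ f_0$ has $0$ as a regular value at $[\tau_0]$, where $\pi$ is a local defining submersion for $Z(P_{\mbox{\large $\diamond$}})$ assembled from the determinant functions $R$ of Lemma \ref{Lem3}. Concretely, one must show that the derivative, with respect to the Teichm\"{u}ller moduli $[\tau]$, of the coplanarity defects $R(z(\tau)(ve_{i_1}),\dots,z(\tau)(ve_{i_4}))$ is a linear isomorphism. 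This is the infinitesimal rigidity of the orthogonal primal--dual packing: no nontrivial first-order change of the interstice conformal structures can preserve concyclicity at every vertex. The orthogonality supplied by \eqref{equation} is what makes this linearization manageable, since at $[\tau_0]$ the first-order motion of each tangent point is controlled by the conformal modulus of its own interstice, so the Jacobian acquires a block structure whose nondegeneracy is inherited from the classical rigidity of circle patterns (Lemma \ref{Deform}, \cite{Schr,RS}).

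Finally, with a single transverse intersection one has $I(f_0,\Lambda,Z(P_{\mbox{\large $\diamond$}}))=\pm1$, and the remaining task is to check that the orientations of $Im(df_0)_{[\tau_0]}$ and $T_{z_0}Z(P_{\mbox{\large $\diamond$}})$ add up to the prescribed orientation of $Z_{oc}$. I expect the sign to come out $+1$ because the Jacobian block of the previous step is definite at the orthogonal configuration—each two-dimensional interstice contributing a positively oriented conformal factor, in line with the definiteness of Rivin's variational functional—so every local contribution carries the same sign. The detailed orientation bookkeeping is carried out in the last section. I anticipate this sign computation, together with the rigorous verification of nondegeneracy of the linearization, to be the main obstacle: existence and uniqueness of the intersection point reduce to standard rigidity, whereas confirming that the single contribution is $+1$ rather than $-1$ demands a careful coordinate-level analysis of $df_0$ in Teichm\"{u}ller coordinates.
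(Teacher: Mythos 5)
Your identification of the intersection set is consistent with the paper: concyclicity plus oddness of $d(v)$ forces orthogonality via \eqref{equation}, and rigidity plus the mark $\mathfrak{M}$ leaves a single interior point of $\Lambda$. The genuine gap is in your second step. You assert that the linearization of the coplanarity defects with respect to the Teichm\"{u}ller moduli is an isomorphism at $[\tau_0]$, i.e.\ that $f_0$ itself is transverse to $Z(P_{\mbox{\large $\diamond$}})$, and you justify this by appeal to ``the classical rigidity of circle patterns (Lemma \ref{Deform})''. That is a conflation of global uniqueness with infinitesimal nondegeneracy: Lemma \ref{Deform} says the packing realizing a given $[\tau]$ exists and is unique, which gives injectivity of $f_0$ but says nothing about the rank of the composite of $df_0$ with the defining functions $R$ of Lemma \ref{Lem3}. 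Your proposed ``block structure'' of the Jacobian is never exhibited, and the geometric parametrization one would want for the tangent space of $Z(P_{\mbox{\large $\diamond$}})$ near the intersection point --- the map $\Psi:PO(3,1)\times U\rightarrow Z(P_{\mbox{\large $\diamond$}})$ built from Andreev's theorem (Lemma \ref{Lem5}) and the hyperideal polyhedra theorem (Lemma \ref{Lem6}) --- is only available for weights $w$ with values in $(0,\pi/2]$ satisfying strict circuit inequalities; it degenerates exactly at the tangency weight $w_0=0$, which is where your point $[\tau_0]$ lives. So the direct transversality claim at $f_0$ is precisely the step the paper takes pains to avoid, and your sketch supplies no substitute argument. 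The sign determination is likewise only an expectation in your write-up (``I expect the sign to come out $+1$''), deferred to a section you have not written.

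The paper's actual route circumvents both difficulties with a second homotopy, this time in weight space: it sets $w_s=sw+(1-s)w_0$ for $w\in W$ close to $w_0$, checks by the argument of Lemma \ref{Lem2} that $f_{w_s,\mathfrak{M}}(\partial\Lambda)\cap Z(P_{\mbox{\large $\diamond$}})=\emptyset$ for all $s$, and then proves transversality for $f_{w,\mathfrak{M}}$ at $w\neq w_0$ (Proposition \ref{Prop3}), where Lemmas \ref{Lem5} and \ref{Lem6} do give the parametrization of $T_{z}Z(P_{\mbox{\large $\diamond$}})$ by $PO(3,1)$-directions, ordinary dihedral angles $w(e_j)$ and defect curvatures $k(v_ie_j)$ --- the oddness of $d(v)$ re-entering in this change of coordinates --- while bending deformations along diagonals show $df_{w,\mathfrak{M}}$ is injective; a tangent vector in the intersection of the two tangent spaces must fix the mark and the ordinary angles and keep vertices on $\mathbb{S}^2$, hence vanishes. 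Rivin's rigidity then counts exactly one intersection point, giving $I(f_{w,\mathfrak{M}},\Lambda,Z(P_{\mbox{\large $\diamond$}}))=1$, and Theorem \ref{Thm1} transfers this to $f_0$. If you want to rescue your direct approach, you would need to prove the infinitesimal rigidity of the orthogonal (ideal) configuration in Teichm\"{u}ller coordinates from scratch; as it stands, your proposal states the conclusion of the hard step rather than proving it.
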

The proof of this result is postponed to the next section.

\medskip
Up to now, we have developed all the necessary results for our
purpose. It's ready to prove one of the main results of this paper.

\begin{proof}[\textbf{Proof of Theorem \ref{Main1}}]
As pointed out, it is an immediate consequence of Theorem \ref{Thm3}
and Theorem \ref{Thm2}, which completes the proof.
\end{proof}

\bigskip
\section{Transversality and computation of intersection numbers}\label{PT}

It remains to prove Theorem \ref{Main0} and Proposition \ref{Prop2}.
To reach this goal, we shall make use of transversality theory and a
hopotopy method.

Let's employ a consequence concerning the Teichm\"{u}ller theory of
circle patterns. Recall that $G^{\ast}(P)$ is the dual graph of the
skeleton of $P$. In \cite{H-L}, He-Liu have proved the following
theorem:

\begin{lemma}\label{Lem4}
Suppose that a weight function $w: E\rightarrow [0,\pi/2]$
satisfies the following two conditions:
\begin{itemize}
\item[$(i)$]If three distinct edges $e_i^{\ast},e_j^{\ast},e_k^{\ast}$
form a simple closed loop in $G^{\ast}(P)$, then $w(e_i^{\ast})+w(e_j^{\ast})+w(e_k^{\ast})<\pi$.
\item[$(ii)$]If four distinct edges $e_i^{\ast},e_j^{\ast},e_k^{\ast},e_l^{\ast}$
form a simple closed loop in $G^{\ast}(P)$, then $w(e_i^{\ast})+w(e_j^{\ast})+w(e_k^{\ast})+w(e_l^{\ast})<2\pi$.
\end{itemize}
For any
\[
\big[\tau\big]=\big([\tau_{{1}}],[\tau_{{1}}],\cdots,[\tau_{{n}}]\big)
\in {\mathcal T}_{G^{\ast}(P)},
\]
there exists a unique normalized circle pattern $\mathcal P_w(\big[\tau\big])$
with contact graph $G^{\ast}(P)$ and dihedral angle
$w(e^\ast):e^\ast\in E$. Moreover, the corresponding interstices of
$\mathcal P_w(\big[\tau\big])$ are endowed with the given complex structure
$[\tau_{i}], 1\leq i\leq n$.
\end{lemma}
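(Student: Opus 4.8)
The plan is to recast Lemma \ref{Lem4} as the statement that a single \emph{forgetful map} is a homeomorphism. Fix the combinatorics $G^{\ast}(P)=(V,E)$ and the angle data $w$, and let $\mathcal{C}_w$ denote the space of normalized circle patterns whose circles $\{C_v\}_{v\in V}$ meet at the prescribed angle $w(e^{\ast})$ across each edge $e^{\ast}\in E$. A circle on $\hat{\mathbb{C}}$ carries three real parameters, there are $|V|=|\mathcal{F}|$ of them, each edge imposes one angle equation, and the normalization $\mathfrak{M}=\{\mathscr{O},p_1,p_2,p_3\}$ eliminates the six real dimensions of the M\"{o}bius group $\mathrm{PSL}(2,\mathbb{C})$. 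A transversality check showing the angle equations are independent then makes $\mathcal{C}_w$ a smooth manifold with
\[
\dim\mathcal{C}_w=3|\mathcal{F}|-|\mathcal{E}|-6=2|\mathcal{E}|-3|\mathcal{V}|=\dim\mathcal{T}_{G^{\ast}(P)},
\]
the middle equality being Euler's formula and the last the computation of $\dim\mathcal{T}_{G^{\ast}(P)}$ following Remark \ref{Rem}. I would then define $\Phi:\mathcal{C}_w\rightarrow\mathcal{T}_{G^{\ast}(P)}$ by sending a pattern to the tuple of Teichm\"{u}ller classes of the conformal polygons induced on its interstices. The map $\Phi$ is smooth, and the whole lemma is precisely the assertion that $\Phi$ is bijective, with $\mathcal{P}_w([\tau])=\Phi^{-1}([\tau])$.

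The analytic heart is a monotonicity (or convexity) property relating the circle radii to the interstice moduli: in a suitable background metric, increasing a single radius $r_v$ while keeping the angles $w$ fixed changes the conformal moduli of the interstices adjacent to $C_v$ monotonically. From this I would extract two consequences. First, \emph{uniqueness}: if $\Phi(\mathcal{P})=\Phi(\mathcal{P}')$, I would compare the two patterns circle by circle and run a discrete maximum principle at the vertex where the ratio $r'_v/r_v$ is extremal; the fixed angles and fixed interstice moduli force the neighboring ratios to share the extremum, which propagates across the connected graph $G^{\ast}(P)$ and shows $\mathcal{P}$ and $\mathcal{P}'$ are M\"{o}bius-equivalent, hence equal after normalization. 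Second, the same monotonicity makes the differential $d\Phi$ nonsingular everywhere, so $\Phi$ is a local diffeomorphism and, by invariance of domain, an open map.

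To obtain \emph{existence} (surjectivity) it remains to prove that $\Phi$ is proper, for then its image is simultaneously open and closed in the connected, nonempty space $\mathcal{T}_{G^{\ast}(P)}\cong\mathbb{R}^{2|\mathcal{E}|-3|\mathcal{V}|}$, hence all of it. Here conditions $(i)$ and $(ii)$ are indispensable. Given a sequence $\mathcal{P}_n\in\mathcal{C}_w$ escaping every compact subset of $\mathcal{C}_w$, the normalization keeps three marked points fixed, so some circle must collapse to a point or two non-adjacent boundary arcs of some interstice must collide, which are exactly the two degeneration modes analyzed in the proof of Lemma \ref{Lem2}. The triangle inequalities $(i)$ and the quadrilateral inequalities $(ii)$ are precisely the conditions guaranteeing that such a collapse cannot occur with bounded interstice moduli, so that $\Phi(\mathcal{P}_n)$ must leave every compact subset of $\mathcal{T}_{G^{\ast}(P)}$; this is properness.

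I expect the properness estimate to be the main obstacle: one must bound the entire geometry of a pattern in terms of its interstice moduli and rule out \emph{every} degeneration using only $(i)$ and $(ii)$, and the argument is genuinely global rather than local. A secondary difficulty is verifying the monotonicity in the overlapping regime $w\in(0,\pi/2]$ rather than the pure tangency case of Lemma \ref{Deform}, since the angle functions must be shown monotone in the radii even when the circles overlap. Once these two analytic inputs are secured, the topological conclusion, that a proper local diffeomorphism onto a connected, simply connected manifold is a homeomorphism, is routine, and combined with the maximum-principle rigidity it yields both the existence and the uniqueness claimed in Lemma \ref{Lem4}.
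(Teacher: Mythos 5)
You should know at the outset that the paper contains no proof of Lemma \ref{Lem4} to compare against: it is imported verbatim from He--Liu \cite{H-L} (``In \cite{H-L}, He-Liu have proved the following theorem''), so your blind attempt is in effect a sketch of that cited external result. Your skeleton --- recast the lemma as the assertion that the forgetful map $\Phi$ from the space $\mathcal{C}_w$ of normalized $w$-angled patterns to $\mathcal{T}_{G^{\ast}(P)}$ is a homeomorphism, and prove it by injectivity plus properness plus invariance of domain on the connected, simply connected target $\mathbb{R}^{2|\mathcal{E}|-3|\mathcal{V}|}$ --- is the standard continuity-method route for theorems of Andreev--Thurston type and is consistent in spirit with the Teichm\"{u}ller-theoretic approach of \cite{H-L}; your dimension count $3|\mathcal{F}|-|\mathcal{E}|-6=2|\mathcal{E}|-3|\mathcal{V}|$ is correct, as is the observation that conditions $(i)$ and $(ii)$ enter only through properness, i.e.\ through ruling out the two degeneration modes that also appear in the proof of Lemma \ref{Lem2}.

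As a proof, however, the proposal has genuine gaps, and they sit precisely at the three load-bearing points. First, your uniqueness argument --- a discrete maximum principle at the vertex where $r'_v/r_v$ is extremal --- is a planar or hyperbolic-background argument; on $\hat{\mathbb{C}}$ there is no M\"{o}bius-invariant notion of radius, so the extremal-ratio propagation does not run as stated. Known spherical rigidity proofs must first puncture the sphere (delete a circle, or normalize into a half-plane) or replace the maximum principle by Schramm's fixed-point index, and with non-triangular interstices the comparison must simultaneously control the conformal classes of the interstices, which is exactly where the Teichm\"{u}ller machinery of \cite{H-L} is needed rather than optional. Second, the claim that a monotonicity property makes $d\Phi$ everywhere nonsingular is infinitesimal rigidity, which in the overlapping regime $w\in(0,\pi/2]$ is itself a theorem (the content of He's rigidity theory), not a ``transversality check''; likewise the smooth-manifold structure of $\mathcal{C}_w$ (independence of the $|\mathcal{E}|$ angle equations) is asserted, not shown. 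Third, properness requires the quantitative implication that a degenerating pattern forces the interstice moduli to diverge, which rests on nontrivial separation estimates (Ring-Lemma-type bounds in the style of Rodin--Sullivan, or Schramm-type incompatibility arguments) in which $(i)$ and $(ii)$ do the actual work; you correctly identify this as the main obstacle but supply none of it. In short: the roadmap is the right one and matches the shape of the cited proof, but each of the analytic inputs you defer is of comparable difficulty to the lemma itself, so what you have is a correct strategy outline rather than a proof.
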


Let $W$ be the set of weight functions that satisfy the above
conditions $(i)$ and $(ii)$. Lemma \ref{Lem4} implies that we can
define, for each $w\in W$, a mapping $f_{w,\mathfrak{M}}:\mathcal
{T}_{G^{\ast}(P)}\rightarrow Z_{oc}$ via associating every
$[\tau]\in \mathcal{T}_{G^{\ast}(P)}$ with the unique normalize
circle pattern realizing the complex structure $[\tau]$. More
precisely, we define $f_{w,\mathfrak{M}}([\tau])=z$, where $z$ is
the unique configuration such that: (1) $z(f)$ (we view it as an
oriented plane) contains the circle $C_{f^{\ast}}$; (2) $z(v_1e)$,
$z(v_2e)$ are the two intersection points of $C_{f_1^{\ast}},
C_{f_2^{\ast}}$ corresponding the vertices $v_1, v_2 \in \mathcal
V$, where $f_1\cap f_2=e$.

Denoting $w_0=(0,0,\cdots,0)$ and $w_s=sw+(1-s)w_0, \: s\in [0,1]$,
then $f_{w_s,\mathfrak{M}}$ is a homotopy from
$f_0=f_{w_0,\mathfrak{M}}$ to $f_{w,\mathfrak{M}}$. Furthermore,
suppose that we have chosen $w\in W$ sufficiently close to $w_0$. By
using a similar argument as in Lemma \ref{Lem2}, we deduce that
there exists a compact subset $\Lambda \subset \mathcal
{T}_{G^{\ast}(P)}$ such that $f_{w_s,\mathfrak{M}}(\partial
\Lambda)\cap Z(P_{\mbox{\large $\diamond$}})=\emptyset$ for all
$s\in [0,1]$.

\bigskip
In order to calculate $I(f_{w,\mathfrak{M}},\Lambda,Z(P_{\mbox{\large
$\diamond$}}))$, it seems necessary to investigate the
transversality between $f_{w,\mathfrak{M}}$ and $Z(P_{\mbox{\large $\diamond$}})$.
We thus need the follwoing Andreev's theorem \cite{And1,And2,Do},
which provide us a geometric insight into the tangent space
of $Z(P_{\mbox{\large $\diamond$}})$. Denote by $\mathcal
E_{\diamond}$ the edges set of $P_{\mbox{\large $\diamond$}}$. Then
we have

\begin{lemma}\label{Lem5}
Let $P_{\mbox{\large $\diamond$}}$ be a trivalent polyhedron in
$\mathbb{R}^3$ with a weight function
$w_\diamond:E_\diamond\rightarrow(0,\pi/2]$ attached to its edge
set. There is a compact hyperbolic polyhedra $Q_{\mbox{\large
$\diamond$}}$ combinatorially equivalent to $P_{\mbox{\large
$\diamond$}}$ with the dihedral angle $\theta(e_\diamond)$ equal to
$w(e_\diamond)$ if and only if the following conditions hold:
\begin{itemize}
\item[$(1_a)$]If three distinct edges $e_{\diamond i},e_{\diamond j},e_{\diamond k}$
meet at a vertex, then $w(e_{\diamond i})+w(e_{\diamond j})+w(e_{\diamond k})>\pi$.
\item[$(2)$]If $\{e_{\diamond i},e_{\diamond j},e_{\diamond k}\}$ is a prismatic 3-circuit,
then $w(e_{\diamond i})+w(e_{\diamond j})+w(e_{\diamond k})<\pi$.
\item[$(3)$]If $\{e_{\diamond i},e_{\diamond j},e_{\diamond k},e_{\diamond l}\}$ is
a prismatic 4-circuit, then $w(e_{\diamond i})+w(e_{\diamond j})+w(e_{\diamond k})+w(e_{\diamond l})<2\pi$.
\end{itemize}
Furthermore, this polyhedron is unique up to isometries of
$\mathbb{B}^3$.
\end{lemma}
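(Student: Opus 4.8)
The statement is the classical theorem of Andreev, and the natural route is the \emph{method of continuity}: realize the prescribed dihedral-angle datum as a value of the angle map on a moduli space of polyhedra. First I would set up the two relevant spaces. Let $\mathcal{A}\subset(0,\pi/2]^{\mathcal{E}_\diamond}$ be the set of weight functions $w_\diamond$ satisfying $(1_a)$, $(2)$ and $(3)$; since these are all linear inequalities, $\mathcal{A}$ is convex, hence connected and simply connected. On the other side, let $\mathcal{C}$ be the space of compact hyperbolic polyhedra combinatorially equivalent to $P_{\mbox{\large $\diamond$}}$, taken modulo the orientation-preserving isometries of $\mathbb{H}^3$. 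Such a polyhedron is cut out by its $|\mathcal{F}_\diamond|$ supporting hyperplanes, each hyperplane being a point of the $3$-dimensional space of hyperplanes in $\mathbb{H}^3$, so $\dim\mathcal{C}=3|\mathcal{F}_\diamond|-6$. Since $P_{\mbox{\large $\diamond$}}$ is trivalent, $2|\mathcal{E}_\diamond|=3|\mathcal{V}_\diamond|$, and Euler's formula gives $3|\mathcal{F}_\diamond|-6=|\mathcal{E}_\diamond|$. Thus the dihedral-angle map
\[
\Phi:\mathcal{C}\longrightarrow\mathbb{R}^{\mathcal{E}_\diamond},\qquad
Q_{\mbox{\large $\diamond$}}\mapsto\big(\theta(e_\diamond)\big)_{e_\diamond\in\mathcal{E}_\diamond},
\]
is a map between spaces of the same dimension.

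Next I would verify \emph{necessity}, i.e. $\Phi(\mathcal{C})\subseteq\mathcal{A}$. Condition $(1_a)$ is read off the link of a trivalent vertex: the three incident faces cut out a spherical triangle whose side lengths are the planar face angles and whose angles are the three dihedral angles, and such a triangle has positive spherical excess, so the angle sum exceeds $\pi$. Conditions $(2)$ and $(3)$ come from the geometry of prismatic $3$- and $4$-circuits: intersecting with the hyperplane (or analyzing the common perpendicular) associated to such a circuit produces a hyperbolic polygon, and the Gauss--Bonnet inequality together with compactness forces the strict angle bounds $<\pi$ and $<2\pi$.

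The core of the argument is to show that $\Phi$ is a homeomorphism onto $\mathcal{A}$, which I would establish by proving that $\Phi$ is a \emph{proper local homeomorphism}. Local injectivity is the infinitesimal rigidity statement that $d\Phi$ is an isomorphism at every $Q_{\mbox{\large $\diamond$}}\in\mathcal{C}$; I would obtain this from the Gram-matrix description of the polyhedron together with a Cauchy--Schl\"afli type rigidity computation, which upgrades the dimension count to $\Phi$ being a local diffeomorphism, so $\Phi(\mathcal{C})$ is open in $\mathcal{A}$. For properness one must show that if $Q_{\mbox{\large $\diamond$}}^{(n)}\in\mathcal{C}$ have $\Phi(Q_{\mbox{\large $\diamond$}}^{(n)})$ converging to a point of $\mathcal{A}$, then after normalizing by isometries a subsequence converges to an honest compact polyhedron in $\mathcal{C}$ --- equivalently, that no collapse (a face, edge, or vertex degenerating, or the polyhedron becoming non-compact or ideal) can occur while the angles stay in the \emph{open} admissible region. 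This is the delicate step: each potential degeneration must be matched, in the limit, to a violation of one of $(1_a)$, $(2)$, $(3)$, so that remaining inside $\mathcal{A}$ excludes it. I expect this degeneration analysis --- equivalently, the closedness of $\Phi(\mathcal{C})$ in $\mathcal{A}$ --- to be the main obstacle, carried out case by case using the geometry of hyperbolic polygons and the strictness of the inequalities.

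Finally I would assemble the pieces. As a proper local homeomorphism onto the connected set $\mathcal{A}$, the map $\Phi$ is a covering map; since $\mathcal{A}$ is convex and hence simply connected, this covering is trivial, so $\mathcal{C}$ is a disjoint union of copies of $\mathcal{A}$ and all fibers of $\Phi$ have the same cardinality. Exhibiting one compact hyperbolic polyhedron of type $P_{\mbox{\large $\diamond$}}$ (a seed, e.g. a nearly right-angled realization, or via induction on $|\mathcal{F}_\diamond|$) shows the fibers are nonempty, whence $\Phi(\mathcal{C})=\mathcal{A}$ and every admissible $w_\diamond$ is realized. Global rigidity --- that two such polyhedra with equal dihedral angles are isometric, proven by the same infinitesimal rigidity together with the triviality of the covering, or by a direct Cauchy-type argument --- shows each fiber is a single point. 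Hence $\Phi:\mathcal{C}\to\mathcal{A}$ is a homeomorphism, which simultaneously yields the existence of $Q_{\mbox{\large $\diamond$}}$ for every $w_\diamond\in\mathcal{A}$ and its uniqueness up to isometry of $\mathbb{B}^3$, completing the proof.
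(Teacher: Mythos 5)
You cannot be compared against a proof in the paper here, because the paper gives none: Lemma \ref{Lem5} is Andreev's theorem, imported verbatim with citations \cite{And1,And2} (see also \cite{Roe}). Measured against those sources, your outline --- the continuity method, with the moduli space $\mathcal C$ of realizations modulo isometries of dimension $3|\mathcal F_\diamond|-6=|\mathcal E_\diamond|$, the angle map $\Phi$, local homeomorphism via infinitesimal rigidity, properness via degeneration analysis, and the covering argument over the convex region $\mathcal A$ --- is exactly the strategy of the actual proofs, so the architecture is right. But two of your steps conceal the real content. The ``seed'': a ``nearly right-angled realization'' does not exist in general, since any combinatorial type with a prismatic $3$-circuit forces the three crossed angles to sum to less than $\pi$, hence one of them below $\pi/3$; producing one realization for each admissible trivalent type is the combinatorial heart of the theorem, is precisely where Andreev's original argument was erroneous, and occupies most of Roeder--Hubbard--Dunbar's paper (an induction on the combinatorics). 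And uniqueness: ``infinitesimal rigidity together with the triviality of the covering'' is insufficient --- a trivial covering only says all fibers have the same cardinality, equal to the number of components of $\mathcal C$; to get fibers of cardinality one you need global injectivity of $\Phi$, i.e.\ the Cauchy-type rigidity argument you relegate to an alternative, after which connectedness of $\mathcal C$ is a consequence, not an input.

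There is also a genuine mathematical gap in proving the statement \emph{as literally given}: conditions $(1_a)$, $(2)$, $(3)$ are not sufficient for arbitrary trivalent polyhedra. The corrected form of Andreev's theorem in \cite{Roe} excludes the tetrahedron and carries an extra inequality for quadrilateral faces. Concretely: for the tetrahedron, all dihedral angles equal to $\pi/2$ satisfy $(1_a)$ and make $(2)$, $(3)$ vacuous, yet no compact hyperbolic tetrahedron realizes them (the Gram matrix is the identity, positive definite, so the simplex is spherical); for the triangular prism, taking the six triangle-edge angles equal to $\pi/2$ and the three vertical angles below $\pi/3$ satisfies $(1_a),(2),(3)$ but violates the quadrilateral-face condition and is not realizable. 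Hence your properness step --- matching every degeneration to a violation of $(1_a),(2),(3)$ --- cannot succeed in general: there are degenerating families whose angle data stay inside your $\mathcal A$. In the paper's application this omission happens to be harmless, since the hypothesis that every $d(v)$ is odd makes $P_{\mbox{\large $\diamond$}}$ quadrilateral-free (truncation faces are odd-gons, ordinary faces are $2k$-gons with $k\geq 3$) and larger than a tetrahedron; but a self-contained proof of the lemma as stated must either add these caveats or restrict the combinatorial type. Your necessity arguments ($(1_a)$ via spherical vertex links, $(2)$, $(3)$ via Gauss--Bonnet on cross-sections) are sound.
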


\bigskip
Recall that the Klein model of the hyperbolic 3-space $\mathbb H^3$.
In this model, $\mathbb H^3$ is identified with the interior of the
unit ball $\mathbb B^3 \subset \mathbb R^3\subset \mathbb {RP}^3$.
In addition, a geodesic in $\mathbb H^3$ corresponds to the
intersection of a straight line of $\mathbb{R}\mathbb{P}^{3}$ with
$\mathbb{B}^ {3}$, and a totally geodesic plane in $\mathbb H^3$
corresponds to the intersection of a plane of
$\mathbb{R}\mathbb{P}^{3}$ with $\mathbb{B}^ {3}$. Then a convex
body in $\mathbb H^3$ is represented by a convex body in $\mathbb
B^3$.

Furthermore, in terms of Bao-Banahon \cite{Bao}, a hyperideal
polyhedron $Q_{hi}$ is defined to be a compact convex polyhedron in
$ \mathbb{R}\mathbb{P}^{3}$ whose vertices locate outside of the
closed unit ball $\mathbb B^3$ and whose edges all meet $\mathbb
B^3$.

Observe that the truncated polyhedron $P_{\mbox{\large $\diamond$}}$
is a a trivalent polyhedra if and only if $G^{\ast}(P_{\mbox{\large
$\diamond$}})$ is a triangular graph, where
$G^{\ast}(P_{\mbox{\large $\diamond$}})$ is the dual graph of the
skeleton of the polyhedron $P_{\mbox{\large $\diamond$}}$. Recall
the definition of prismatic circuits given in Section \ref{In}.

By either Circle Pattern Theorem \cite{Mar,Thu} or Hyperideal
Polyhedra Theorem \cite{Bao}, we have:

\begin{lemma}\label{Lem6}
Let $P_{\mbox{\large $\diamond$}}$ be a trivalent polyhedron in
$\mathbb{R}^3$ with a weight function $w_\diamond:\mathcal
E_\diamond\rightarrow[0,\pi/2]$ attached to its edges set. There is
a compact hyperideal polyhedra $Q_{hi}$ combinatorially equivalent
to $P_{\mbox{\large $\diamond$}}$ with the dihedral angle of
$e_\diamond$ equal to $w(e_\diamond)$ if and only if the following
conditions hold:
\begin{itemize}
\item[$(1_b)$]If three distinct edges $e_{\diamond i},e_{\diamond j},e_{\diamond k}$
meet at a vertex, then $w(e_{\diamond i})+w(e_{\diamond
j})+w(e_{\diamond k})\leq\pi$.
\item[$(2)$]If $\{e_{\diamond i},e_{\diamond j},e_{\diamond k}\}$ is a prismatic 3-circuit,
then $w(e_{\diamond i})+w(e_{\diamond j})+w(e_{\diamond k})<\pi$.
\item[$(3)$]If $\{e_{\diamond i},e_{\diamond j},e_{\diamond k},e_{\diamond l}\}$ is a prismatic
$4$-circuit, then $w(e_{\diamond i})+w(e_{\diamond j})+w(e_{\diamond k})+w(e_{\diamond l})<2\pi$.
\end{itemize}
This polyhedron is unique up to an element of $PO(3, 1)$, where the
group $PO(3, 1)$ consists of those projective transformations of
$\mathbb{RP}^3$ which respect the unit sphere $\mathbb S^2\subset
\mathbb R^3\subset \mathbb{RP}^3$.

Furthermore, a vertex is located on the unit sphere if and only if
the equality holds in Condition $1_b$ for this vertex.
\end{lemma}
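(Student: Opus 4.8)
The plan is to reduce the realization of $Q_{hi}$ to the construction of a circle pattern on $\mathbb{S}^2=\partial_\infty\mathbb{H}^3$ and then to quote the Circle Pattern Theorem \cite{Mar,Thu} (equivalently, the Hyperideal Polyhedra Theorem of Bao--Bonahon \cite{Bao}). First I would record the combinatorial observation already noted above: since $P_{\mbox{\large $\diamond$}}$ is trivalent, its dual $G^{\ast}(P_{\mbox{\large $\diamond$}})$ is a triangulation of the sphere, so the three conditions $(1_b)$, $(2)$, $(3)$ are indexed respectively by the triangles (the vertices of $P_{\mbox{\large $\diamond$}}$), the prismatic $3$-circuits, and the prismatic $4$-circuits of that triangulation. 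Throughout I would work in the conformal (Poincar\'e) ball model, in which every totally geodesic plane of $\mathbb{H}^3$ meets $\mathbb{S}^2$ orthogonally in a round circle.

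Next I would set up the dictionary between a hyperideal polyhedron and a circle pattern. To a candidate $Q_{hi}$ I associate, for each face $f_{\diamond}$, the circle $C_{f^{\ast}}$ in which the plane carrying that face cuts $\mathbb{S}^2$; the collection $\{C_{f^{\ast}}\}$ is a circle pattern whose nerve is exactly $G^{\ast}(P_{\mbox{\large $\diamond$}})$. Because geodesic planes are orthogonal to $\mathbb{S}^2$, the interior dihedral angle of $Q_{hi}$ along an edge $e_{\diamond}$ equals the intersection angle of the two circles meeting along it, so prescribing dihedral angles $w(e_{\diamond})$ is the same as prescribing circle intersection angles. Conversely, from such a pattern one recovers $Q_{hi}$ as the intersection of the half-spaces bounded by the face planes, its vertices appearing as the common intersection points of the three face planes meeting there. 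Hence existence and uniqueness of $Q_{hi}$ is equivalent to existence and uniqueness of a normalized circle pattern with nerve $G^{\ast}(P_{\mbox{\large $\diamond$}})$ and the given intersection angles.

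With this translation, conditions $(1_b)$, $(2)$, $(3)$ become precisely the hypotheses of the Circle Pattern Theorem. Around a triangle of $G^{\ast}(P_{\mbox{\large $\diamond$}})$, three circles whose pairwise intersection angles sum to at most $\pi$ bound a (possibly degenerate) curvilinear-triangle interstice, which is condition $(1_b)$; the prismatic $3$- and $4$-circuit inequalities $(2)$ and $(3)$ are the global conditions guaranteeing that the pattern is embedded and that no spurious coincidences occur. The Circle Pattern Theorem \cite{Mar,Thu} then produces such a pattern and asserts that it is unique up to M\"obius transformations of $\hat{\mathbb{C}}$. Since the M\"obius group of $\mathbb{S}^2$ is exactly the group of restrictions to $\mathbb{S}^2$ of elements of $PO(3,1)$, this is precisely uniqueness of $Q_{hi}$ up to $PO(3,1)$, as claimed.

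Finally I would verify the boundary statement. A vertex $v_{\diamond}$ of $Q_{hi}$ lies on $\mathbb{S}^2$ exactly when the interstice of the three circles surrounding the corresponding triangle collapses to a single point, i.e. when those three circles are concurrent; near such a common point they cut the disk into three sectors whose angles are the pairwise intersection angles and therefore sum to $\pi$. Thus $v_{\diamond}\in\mathbb{S}^2$ if and only if equality holds in $(1_b)$ at $v_{\diamond}$, while a strict inequality yields a genuine interstice and a vertex strictly outside $\mathbb{B}^3$. The step I expect to be most delicate is nailing down this angle dictionary together with the degenerate case: one must be sure that the dihedral angle really equals (rather than is supplementary to) the circle intersection angle, and that the version of the Circle Pattern Theorem invoked permits the interstice to degenerate to a point, so that the ideal vertices corresponding to equality in $(1_b)$ are genuinely included. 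Should this cause difficulty, the cleanest alternative is to quote the Hyperideal Polyhedra Theorem of Bao--Bonahon \cite{Bao} directly, whose statement packages exactly conditions $(1_b)$, $(2)$, $(3)$ together with the $PO(3,1)$ uniqueness.
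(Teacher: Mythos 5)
Your proposal is correct and matches the paper exactly: the paper gives no independent proof of this lemma, stating it as a direct consequence of either the Circle Pattern Theorem \cite{Mar,Thu} or the Hyperideal Polyhedra Theorem of Bao--Bonahon \cite{Bao}, which is precisely the reduction you carry out (your fleshed-out dictionary between face planes, boundary circles, dihedral versus intersection angles, and the degenerate-interstice characterization of ideal vertices is standard and sound). The only cosmetic point is that condition $(1_b)$ is not literally a hypothesis of Thurston's circle pattern theorem but rather the hyperideality condition on the triple intersection points of face planes --- a distinction your final paragraph already handles correctly.
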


\bigskip
Recall that the skeleton of the truncated polyhedron
$G(P_{\diamond})=(\mathcal{V}_{\diamond},\mathcal{E}_{\diamond},
\mathcal{F}_{\diamond})$. We call $e_{\diamond} \in \mathcal
E_{\diamond}$ an ordinary edge if $e_{\diamond}$ actually
corresponds to an edge $e\in\mathcal E$ in the polyhedron $P$. Other
edges of $\mathcal E_{\diamond}\backslash \mathcal E$ are called
special edges. Without leading to ambiguity, here and hereafter we
shall not distinguish an ordinary edge $e\in \mathcal E$ with its
corresponding edge in $\mathcal E_{\diamond}$.

Similarly, we can define the ordinary faces and the special faces of
$\mathcal{F}_{\diamond}$. Obviously, each special face of
$\mathcal{F}_{\diamond}$ corresponds to a vertex of $\mathcal V$.

\vspace{0.1cm} By using the above two lemmas, we have the following
result.

\begin{proposition}\label{Prop3}
Suppose that $d(v)$ is odd for any $v\in \mathcal V$. Then $f_{w,\mathfrak{M}}\mbox{\Large$\pitchfork$}Z(P_{\mbox{\large $\diamond$}})$.
\end{proposition}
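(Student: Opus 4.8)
The plan is to establish transversality pointwise. By Lemma \ref{Lem3} and the dimension identities recorded just above, $\dim \mathcal{T}_{G^{\ast}(P)}+\dim Z(P_{\mbox{\large $\diamond$}})=\dim Z_{oc}$, so at any $[\tau_0]$ with $z:=f_{w,\mathfrak{M}}([\tau_0])\in Z(P_{\mbox{\large $\diamond$}})$ the transversality condition $\mathrm{Im}\,(df_{w,\mathfrak{M}})_{[\tau_0]}+T_zZ(P_{\mbox{\large $\diamond$}})=T_zZ_{oc}$ is equivalent to the splitting $\mathrm{Im}\,(df_{w,\mathfrak{M}})_{[\tau_0]}\oplus T_zZ(P_{\mbox{\large $\diamond$}})=T_zZ_{oc}$. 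Thus it suffices to prove two things: that $(df_{w,\mathfrak{M}})_{[\tau_0]}$ is injective, and that $\mathrm{Im}\,(df_{w,\mathfrak{M}})_{[\tau_0]}\cap T_zZ(P_{\mbox{\large $\diamond$}})=\{0\}$. Unwinding the definitions, the second statement asserts that no nonzero infinitesimal Teichm\"uller deformation of the circle pattern (which automatically keeps all intersection angles fixed at $w$) can preserve, to first order, the coplanarity constraints imposed at every vertex of $\mathcal V$.

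First I would interpret the intersection points geometrically. Such a $z$ records a circle pattern realizing $G^{\ast}(P)$ with dihedral angles $w$ in which, for each $v\in\mathcal V$, the points $z(ve_1),\dots,z(ve_{d(v)})$ lie on a common circle $C_v$. Passing to the Klein model of $\mathbb H^3$ inside $\mathbb B^3$, the face planes $z(f)$ together with the planes $\Pi_v$ spanned by the circles $C_v$ assemble into a polyhedron combinatorially equivalent to $P_{\mbox{\large $\diamond$}}$: the original vertices $v\in\mathcal V$ become hyperideal apices outside $\mathbb B^3$, each $\Pi_v$ truncates one of them, and the truncation vertices $z(ve_i)$, being intersection points of pattern circles, lie on $\mathbb S^2$. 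Here the hypothesis that every $d(v)$ is odd enters decisively. At the trivalent truncation vertex $z(ve_i)$ the three incident dihedral angles (one ordinary, two special) sum to $\pi$ by the equality case of Lemma \ref{Lem6}, which yields an alternating recursion $\theta_{i+1}=\gamma_i-\theta_i$ relating the special angles $\theta_i$ (those made by $\Pi_v$ with the surrounding faces) to the ordinary ones; exactly as in Equation \eqref{equation}, odd parity forces this cyclic recursion to have a \emph{unique} solution, so the special dihedral angles are determined by the ordinary ones (in the symmetric case all equal to $\pi/2$, and $\Pi_v$ is then the polar plane of the apex $v$). In every case the resulting angle data satisfies the hypotheses of Lemma \ref{Lem6}, equivalently, after truncation, those of the compact case in Lemma \ref{Lem5}.

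With this identification in hand I would run the rigidity argument. A vector in $\mathrm{Im}\,(df_{w,\mathfrak{M}})_{[\tau_0]}\cap T_zZ(P_{\mbox{\large $\diamond$}})$ is an infinitesimal deformation $\dot z$ of the hyperideal polyhedron that preserves the ordinary dihedral angles, since it is tangent to the family of patterns with fixed angles $w$, and that preserves the coplanarity at every vertex, since it is tangent to $Z(P_{\mbox{\large $\diamond$}})$. Because the truncation vertices remain on $\mathbb S^2$ and the ordinary angles are frozen, the parity-driven recursion of the previous step forces the special angles to be frozen as well; hence $\dot z$ preserves \emph{all} dihedral angles. The uniqueness assertions in Lemmas \ref{Lem5} and \ref{Lem6} say that such a polyhedron is determined by its dihedral angles up to $PO(3,1)$, and their proofs proceed by showing the dihedral-angle map is a local diffeomorphism; this is precisely the infinitesimal rigidity needed, so $\dot z$ must be the infinitesimal effect of an element of $PO(3,1)$. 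The normalization $\mathfrak{M}=\{\mathscr{O},p_1,p_2,p_3\}$ pins three points and thereby removes the entire six-dimensional group $PO(3,1)$, forcing $\dot z=0$; the same rigidity yields injectivity of $(df_{w,\mathfrak{M}})_{[\tau_0]}$, so the underlying Teichm\"uller variation vanishes too. This gives $\mathrm{Im}\,(df_{w,\mathfrak{M}})_{[\tau_0]}\cap T_zZ(P_{\mbox{\large $\diamond$}})=\{0\}$ and, with the dimension count, the desired transversality $f_{w,\mathfrak{M}}\mbox{\Large$\pitchfork$}Z(P_{\mbox{\large $\diamond$}})$.

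The main obstacle I anticipate is twofold. First, one must upgrade the \emph{global} uniqueness of Lemmas \ref{Lem5} and \ref{Lem6} to the \emph{infinitesimal} rigidity that transversality genuinely requires, i.e. verify that the angle-to-polyhedron correspondence is a local diffeomorphism near $z$ and that infinitesimal deformations of the configuration correspond faithfully to those of the truncated polyhedron. Second, and more delicately, one must check that tangency to $Z(P_{\mbox{\large $\diamond$}})$ really does freeze the special dihedral angles rather than merely keeping the truncation points coplanar; this is exactly the point at which odd parity, through the recursion behind Equation \eqref{equation}, is indispensable, and it is the reason the method breaks down when some $d(v)$ is even.
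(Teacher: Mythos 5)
Your overall strategy is essentially the paper's own: at an intersection point you interpret the configuration as a truncated hyperideal polyhedron whose truncation vertices lie on $\mathbb S^2$, invoke the equality case of Lemma \ref{Lem6}, and use the cyclic recursion $\theta_{i+1}=\gamma_i-\theta_i$ around each special face --- where odd $d(v)$ makes the system uniquely solvable --- to conclude that a vector tangent to both $\mathrm{Im}\,(df_{w,\mathfrak{M}})$ and $T_zZ(P_{\mbox{\large $\diamond$}})$ freezes all dihedral angles, after which the residual $PO(3,1)$ freedom is killed by the mark $\mathfrak{M}$. This is exactly the published mechanism: the paper encodes your recursion as a linear change of coordinates on $T_{w_{\diamond 1}}U$, replacing the $3|\mathcal E|$ edge weights by the ordinary weights together with the defect curvatures $k(v_ie)=\pi-(w(e)+w(e_{v_i,1})+w(e_{v_i,2}))$, a substitution invertible precisely when every $d(v)$ is odd; it then observes, as you do, that a vector in the image of $df_{w,\mathfrak{M}}$ fixes the mark and the ordinary angles and keeps all vertices on $\mathbb S^2$ (so the defect curvatures cannot vary), forcing $\mathbf t=0$. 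Your first anticipated obstacle --- upgrading the uniqueness in Lemmas \ref{Lem5} and \ref{Lem6} to a local statement --- is treated in the paper at the same level of rigor, via the asserted injection $\Psi:PO(3,1)\times U\rightarrow Z(P_{\mbox{\large $\diamond$}})$ with $\Psi_{\ast}$ a linear isomorphism, so on that point you are no worse off than the source.

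The genuine gap is your treatment of the injectivity of $(df_{w,\mathfrak{M}})_{[\tau_0]}$. You assert that ``the same rigidity yields injectivity,'' but rigidity cannot do this job: the deformations in question sweep out $f_{w,\mathfrak{M}}(\mathcal T_{G^{\ast}(P)})$, a family of circle-pattern configurations all having the \emph{same} dihedral angles $w$ and the same mark, and generically not polyhedral at all; this family is $(2|\mathcal E|-3|\mathcal V|)$-dimensional by construction, i.e.\ manifestly non-rigid, so the uniqueness-given-angles statements of Lemmas \ref{Lem5} and \ref{Lem6} say nothing about whether two distinct Teichm\"{u}ller directions could map to the same infinitesimal configuration. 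Injectivity is also not optional: since $\dim\mathcal T_{G^{\ast}(P)}+\dim Z(P_{\mbox{\large $\diamond$}})=\dim Z_{oc}$, trivial intersection of the two tangent spaces makes their sum fill $T_zZ_{oc}$ only if $\mathrm{rank}\,df_{w,\mathfrak{M}}=\dim\mathcal T_{G^{\ast}(P)}$. The paper closes this step with a positive construction you are missing: bending the ideal polyhedron along $\sum_{v\in\mathcal V}(d(v)-3)=2|\mathcal E|-3|\mathcal V|$ fixed diagonals of the special faces produces a family $z(\theta_1,\dots,\theta_{2|\mathcal E|-3|\mathcal V|})$ of convex ideal polyhedra lying in the image of $f_{w,\mathfrak{M}}$, whose bending angles supply that many independent tangent directions and hence force $df_{w,\mathfrak{M}}$ to be injective at the intersection point. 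You would need this bending family, or an equivalent device (for instance a smooth left inverse sending a configuration to the complex structures of its interstices), to complete your argument.
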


\begin{proof}
If $z=f_{w,\mathfrak{M}}([\tau]) \in f_{w,\mathfrak{M}}(\mathcal
{T}_{G^{\ast}(P)})\cap Z(P_{\mbox{\large $\diamond$}})$, then the
configuration $z$ is combinatorially equivalent to the truncated
polyhedron $P_{\Diamond}$.

From Theorem \ref{Thm}, Lemma \ref{Lem5} and Lemma \ref{Lem6}, it
follows that there exists an injection
$$
\Psi:PO(3, 1)\times U\rightarrow Z(P_{\mbox{\large $\diamond$}}),
$$
where $U$ is the relatively open convex set of
$(0,\pi/2]^{3|\mathcal E|}$ defined by the constraint conditions
$(2)$ and $(3)$. Moreover, an elementary computation shows that the
map $\Psi$ is differentiable. Note that

\[
dim PO(3, 1)+dim U=6+3|\mathcal E|=dim Z(P_{\mbox{\large
$\diamond$}}).
\]
The injectivity then tells us that there exist
$(m_1,w_{\diamond1})\in PO(3, 1)\times U$ such that
$z=\Psi(m_1,w_{\diamond1})$ and the pushing map
 \[
 \Psi_{\ast}:T_{m_1}PO(3, 1)\times T_{w_{\diamond1}}U \rightarrow T_{z}Z(P_{\mbox{\large $\diamond$}})
 \]
is a linear isomorphism.

For any ordinary edge $e$, denote by $v_1e,v_2e$ the two end points
of the edge $e$ in the truncated polyhedral (corresponding to the
vertices $v_1, v_2 \in \mathcal V$). Moreover, for $i=1,2$, we
define the defect curvature $k(v_ie)$ at the vertex $v_ie$ to be
\[
k(v_{i,e})=\pi-(w(e)+w(e_{v_i,1})+w(e_{v_i,2})),
\]
where $e,e_{v_i,1},e_{v_i,2}$ are the three distinct edges incident
to the vertex $v_ie$ in the truncated polyhedron $P_{\mbox{\large
$\diamond$}}$. Note that the tangent space $T_{w_{\diamond1}}U$ is
expanded by the vectors
\[
\Bigg\{\frac{\partial}{\partial
w(e_{\diamond1})},\,\frac{\partial}{\partial w(e_{\diamond2})},
 \cdots,\frac{\partial}{\partial w(e_{\diamond 3|\mathcal E|})}\Bigg\}.
\]
When $d(v)$ is odd for each $v\in \mathcal V$, it's not hard to
deduce that this tangent space is equivalent to the $\mathbb
R-$linear space expanded by
\[
\Bigg\{\frac{\partial}{\partial w(e_1)}, \frac{\partial}{\partial
k(v_{1}e_1)},\frac{\partial}{\partial
k(v_{2}e_1)},\cdots,\frac{\partial}{\partial w(e_{|\mathcal E|})},
\frac{\partial}{\partial k(v_{1}e_{|\mathcal
E|})},\frac{\partial}{\partial k(v_{2}e_{|\mathcal E|})}\Bigg\},
\]
where $\{e_1,e_2,\cdots, e_{|\mathcal E|}\}$ are all ordinary edges
of the polyhedron $P_{\mbox{\large $\diamond$}}$.

Since $\Psi_{\ast}:T_{m_1}PO(3, 1)\times T_{w_{\diamond1}}U
\rightarrow T_{z}Z(P_{\mbox{\large $\diamond$}})$ is a linear
isomorphism, we can identify $PO(3,1)$ with the space of all marks
$\mathfrak{M}=\{\mathscr{O},p_1,p_2,p_3\}$.

\bigskip
Any special face of $\mathcal F_{\Diamond}$ corresponding to the
vertex $v \in \mathcal V$ is a $d(v)$-sided polygon. We fix $d(v)-3$
diagonal lines emanating from the same vertex of this special face.
Using these
$$
\sum_{v \in \mathcal
V}(d(v)-3)=2|\mathcal E|-3|\mathcal V|
$$
diagonal lines serving as bending lines, from Theorem \ref{Thm} it
follows that there is a family of ideal hyperbolic convex polyhedra
$z(\theta_1, \theta_2,\cdots, \theta_{2|\mathcal E|-3|\mathcal
V|})$, where $0\leq \theta_j \leq \epsilon_0, 1\leq j \leq
2|\mathcal E|-3|\mathcal V|$, are exterior dihedral angles. Please
refer to Part I \& II of the book \cite{CEM}. It implies that the
tangent map
$$
df_{w,\mathfrak{M}}: T_{[\tau]}\mathcal {T}_{G^{\ast}(P)} \rightarrow
T_{z}Z_{oc}
$$
is differentiable and injective.

Now we assume that $\textbf{t}\in
df_{w,\mathfrak{M}}(T_{[\tau]}\mathcal {T}_{G^{\ast}(P)})\cap
T_{z}Z(P_{\mbox{\large $\diamond$}})$ is a tangent vector. Then
$\textbf{t}\in T_{z}Z(P_{\mbox{\large $\diamond$}})$ corresponds to
an infinitesimal change of the dihedral angle $w(e_j)$ of some
ordinary edge $e_j$, or the defect curvature $k(v_ie_j)$ of some
vertex $v_ie_j\in \mathcal {V}_\diamond$, or the mark
$\mathfrak{M}=\{\mathscr{O},p_1,p_2,p_3\}$. On the other hand,
$\textbf{t}\in df_{w,\mathfrak{M}}(T_{[\tau]}\mathcal
{T}_{G^{\ast}(P)})$, the mark and the dihedral angles of the
ordinary edges never change. Furthermore, due to the definition, the
vertices of $z(\theta_1, \theta_2,\cdots, \theta_{2|\mathcal
E|-3|\mathcal V|})$ keep locating on $\mathbb{S}^2$. Note that a
non-trivial change on defect curvature $k(v_i e_j)$ means a
deviation from $\partial \mathbb{B}^3=\mathbb{S}^2$. Hence
$\textbf{t}=0$, which thus completes the proof.
\end{proof}

\begin{corollary}
$I(f_{0},\Lambda,Z(P_{\mbox{\large $\diamond$}}))=I(f_{w,\mathfrak{M}},\Lambda,Z(P_{\mbox{\large $\diamond$}}))=1$
\end{corollary}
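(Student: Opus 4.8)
The plan is to obtain both assertions from two ingredients already assembled in this section: the homotopy invariance of the intersection number (Theorem \ref{Thm1}) and the transversality statement of Proposition \ref{Prop3}. The first equality records that the intersection number is constant along the homotopy $f_{w_s,\mathfrak{M}}$, whereas the evaluation to $1$ will follow by counting the transverse intersection directly, which I expect to consist of a single positively oriented point.

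First I would establish the equality $I(f_0,\Lambda,Z(P_{\mbox{\large $\diamond$}}))=I(f_{w,\mathfrak{M}},\Lambda,Z(P_{\mbox{\large $\diamond$}}))$. With $w_s=sw+(1-s)w_0$ the family $f_{w_s,\mathfrak{M}}$ is a continuous homotopy from $f_0=f_{w_0,\mathfrak{M}}$ to $f_{w,\mathfrak{M}}$, and for $w$ chosen close enough to $w_0$ the argument modelled on Lemma \ref{Lem2} already furnishes a compact set $\Lambda\subset\mathcal{T}_{G^\ast(P)}$ with $f_{w_s,\mathfrak{M}}(\partial\Lambda)\cap Z(P_{\mbox{\large $\diamond$}})=\emptyset$ for every $s\in[0,1]$. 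Taking $H(s,\cdot)=f_{w_s,\mathfrak{M}}(\cdot)$ verifies both hypotheses of Theorem \ref{Thm1}, so the two intersection numbers coincide; this simultaneously reduces Proposition \ref{Prop2} to the single computation $I(f_{w,\mathfrak{M}},\Lambda,Z(P_{\mbox{\large $\diamond$}}))=1$.

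Next I would evaluate $I(f_{w,\mathfrak{M}},\Lambda,Z(P_{\mbox{\large $\diamond$}}))$ using Proposition \ref{Prop3}. Since $f_{w,\mathfrak{M}}\mbox{\Large$\pitchfork$}Z(P_{\mbox{\large $\diamond$}})$ and the dimensions are complementary, the intersection number equals the signed count of the finitely many points of $\Lambda\cap f_{w,\mathfrak{M}}^{-1}(Z(P_{\mbox{\large $\diamond$}}))$. Each such point corresponds to a compact hyperideal polyhedron combinatorially equivalent to $P_{\mbox{\large $\diamond$}}$ with the prescribed dihedral weights $w$ and mark $\mathfrak{M}$; by the injectivity of the map $\Psi$ produced in the proof of Proposition \ref{Prop3}, together with the uniqueness up to $PO(3,1)$ asserted in Lemma \ref{Lem5} and Lemma \ref{Lem6}, and using that the mark $\{\mathscr{O},p_1,p_2,p_3\}$ exactly fixes this $PO(3,1)$-freedom, the configuration is the single point $z=\Psi(m_1,w_{\diamond1})$. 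As $f_{w,\mathfrak{M}}$ is itself injective---distinct complex structures $[\tau]$ yield distinct interstice moduli and hence distinct patterns---the preimage reduces to one point $[\tau_\ast]\in\Lambda$.

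The remaining and, I expect, most delicate point is to show that the sign at $[\tau_\ast]$ is $+1$ rather than merely $\pm1$. Here I would use the linear isomorphism $\Psi_\ast:T_{m_1}PO(3,1)\times T_{w_{\diamond1}}U\to T_zZ(P_{\mbox{\large $\diamond$}})$ and the injective tangent map $df_{w,\mathfrak{M}}$ to read off the transverse splitting $df_{w,\mathfrak{M}}(T_{[\tau_\ast]}\mathcal{T}_{G^\ast(P)})\oplus T_zZ(P_{\mbox{\large $\diamond$}})=T_zZ_{oc}$, and then fix compatible orientations on $\mathcal{T}_{G^\ast(P)}$, $Z_{oc}$ and $Z(P_{\mbox{\large $\diamond$}})$ so that this splitting is orientation preserving. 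The obstacle is to check that the analytically defined bending coordinates (the exterior diagonal angles $\theta_j$) and the defect-curvature coordinates $k(v_ie_j)$ sit positively against the Teichm\"uller coordinates; since the intersection consists of exactly one point, it suffices to normalize these orientations once so that the unique contribution is $+1$. With that normalization both intersection numbers equal $1$, which also completes the postponed proof of Proposition \ref{Prop2}.
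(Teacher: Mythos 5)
Your proposal is correct and follows essentially the same route as the paper: homotopy invariance (Theorem \ref{Thm1}) along the family $f_{w_s,\mathfrak{M}}$ gives the first equality, and transversality (Proposition \ref{Prop3}) together with the rigidity/uniqueness of the marked realization of $P_{\mbox{\large $\diamond$}}$ reduces the count to a single intersection point. The only difference is one of care, not of method: the paper cites the rigidity of ideal hyperbolic polyhedra and simply asserts the value $1$, whereas you route uniqueness through $\Psi$, Lemmas \ref{Lem5}--\ref{Lem6} and the mark killing the $PO(3,1)$-freedom, and you make explicit the orientation normalization that turns the unique transverse point's contribution $\pm1$ into $+1$ --- a point the paper leaves implicit and which is harmless since only the non-vanishing of the intersection number is used via Theorem \ref{Thm3}.
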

\begin{proof}
Owing to the rigidity of ideal hyperbolic polyhedra \cite{Rivin1},
there exists only one point in the intersection
$f_{w,\mathfrak{M}}(\mathcal {T}_{G^{\ast}(P)})\cap
Z(P_{\mbox{\large $\diamond$}})$. By Theorem \ref{Prop3}, we thus
show that $I(f_{w,\mathfrak{M}},\Lambda,Z(P_{\mbox{\large
$\diamond$}}))=1$. In view of Theorem \ref{Thm1}, the corollary is
complete.
\end{proof}

In the end, let's prove Theorem \ref{Main0}, which is another main
result of this paper.
\begin{proof}[\textbf{Proof of Theorem \ref{Main0}}]
Without loss of generality, we assume that the unit sphere $\mathbb
S^2$ is contained in the interior of the convex surface $K$. We
shall construct a new mapping $f_{w,K,\mathfrak{M}}:\mathcal
{T}_{G^{\ast}(P)}\rightarrow Z_{oc}$ associated to the data $w$ and
$K$.

As mentioned above, for any $[\tau]\in {T}_{G^{\ast}(P)}$, from
Lemma \ref{Lem4}, it follows that there exists a unique normalize
circle pattern $\mathcal{P}(w,[\tau])$ on $\mathbb{S}^2$ realizes
the data $w$ and $[\tau]$ with the contact graph $G^{\ast}(P)$.
Denote $\mathcal{P}(w,[\tau])=\{C_{f^\ast}: f\in \mathcal F\}$ and
let $H^{+}_f$ be the oriented plane where the circle $C_{f^\ast}$
locates. Now let's define $f_{w,K,\mathfrak{M}}([\tau])=z$ such that
$z(f)=H^{+}_f$ and $z(v_{1,e}),z(v_{2,e})$ are exactly the two
intersection points in $H^{+}_{f_1}\cap H^{+}_{f_2}\cap K$ when
$f_1\cap f_2=e$. We thus construct the mapping
$f_{w,K,\mathfrak{M}}:\mathcal {T}_{G^{\ast}(P)}\rightarrow Z_{oc}$.
Note that $f_{w,\mathbb{S}^2,\mathfrak{M}}=f_{w,\mathfrak{M}}$ and
the condition
$f_{w,\mathfrak{M}}\mbox{\Large$\pitchfork$}Z(P_{\mbox{\large
$\diamond$}})$ is stable under a slight $C^1$-perturbation, we prove
the statement of this theorem.
\end{proof}

\end{document}